\numberwithin{equation}{section}
\newtheorem{theorem}{\bf Theorem}[section]
\newtheorem{lem}{\bf Lemma}[section]
\newtheorem{cor}{\bf Corollary}[section]
\newtheorem{remark}{\bf Remark}[section]
\newtheorem{example}{\bf Example}[section]
\newcommand\dv{\mathrm{div}}
\newcommand\tr{\mathrm{tr}}
\newcommand\rc{\mathrm{Ric}}
\begin{document}

\title[Eigenvalues of fourth-order elliptic operators in divergence form]{Inequalities for eigenvalues of fourth-order elliptic operators in divergence form on complete Riemannian manifolds}
\author[M.C. Araújo Filho]{Marcio C. Araújo Filho$^1$}
\address{$^1$Departamento de Matemática, Universidade Federal de Rondônia, Campus Ji-Paraná, R. Rio Amazonas, 351, Jardim dos Migrantes, 76900-726 Ji-Paraná, Rondônia, Brazil}
\email{$^1$marcio.araujo@unir.br}
\keywords{Elliptic operators in divergence form, Eigenvalues, Cheng-Yau operator, Gaussian shrinking soliton.}
\subjclass[2010]{Primary 35P15; Secondary 47A75, 53C42, 53C25}

\begin{abstract}
We prove some inequalities of Payne–Pólya–Weinberger–Yang type  for eigenvalues of fourth-order elliptic operators  in weighted divergence form on complete Riemannian manifolds which generalizes the corresponding result for the clamped plate problem. We also prove estimates for lower order eigenvalues that contain some of the estimates from the literature. As an application of our results, we obtain eigenvalues estimates for the bi-drifted Cheng-Yau operator.  
\end{abstract}
\maketitle

\section{Introduction and the main results}
Let $(M^n, \langle , \rangle)$ be an $n$-dimensional complete Riemannian manifold and $\Omega\subset M^n$ be a bounded and connected domain with smooth boundary $\partial\Omega$. Let us consider a symmetric positive definite $(1,1)$-tensor  $T$ on $M^n$ and a smooth real-valued function $\eta$ on $M$, so that we define a second-order elliptic differential operator $\mathscr{L}$ in the $(\eta,T)$-divergence form as follows:
\begin{equation}\label{eq1.1}
    \mathscr{L}f :=\dv_\eta (T(\nabla f)) = \dv(T(\nabla f)) - \langle \nabla \eta, T(\nabla f) \rangle,
\end{equation}
where $\dv$ stands for the divergence operator and $\nabla$ is the gradient operator. 

The purpose of this paper is to give some inequalities for eigenvalues of a larger class of fourth-order elliptic operators in divergence form on Riemannian manifolds, which contains some known estimates of the literature. To be more precise, we shall consider the following eigenvalue problem of  fourth order with Dirichlet boundary condition:
\begin{equation}\label{problem1}
    \left\{\begin{array}{ccccc} 
    \mathscr{L}^2  u  &=&  \Gamma u & \mbox{in } & \Omega,\\
     u=\frac{\partial u}{\partial \nu_T}&=&0 & \mbox{on} & \partial\Omega, 
    \end{array} 
    \right.
\end{equation}
where $\frac{\partial u}{\partial \nu_T}=\langle T(\nabla u), \nu \rangle$ with $\nu$ the outward unit normal vector field of $\partial \Omega$. 

It can be seen that $\mathscr{L}^2$ is a formally self-adjoint operator in the space of all smooth real-valued functions $u$ such that $u|_{\partial \Omega}=\frac{\partial u}{\partial \nu_T}|_{\partial \Omega}=0$, with respect to the inner product
\begin{align*}
    \langle\langle u , v \rangle \rangle = \int_\Omega uv dm,
\end{align*}
where $dm = e^{-\eta}d\Omega$ is the weighted volume form on $\Omega$ (see Section~\ref{preliminaries}). Thus the spectrum of Problem~\eqref{problem1} is real and discrete, that is,
\begin{equation*}
    0 < \Gamma_1 \leq \Gamma_2 \leq \cdots \leq \Gamma_k \leq \cdots\to\infty,
\end{equation*}
where each $\Gamma_i$ is repeated according to its multiplicity.

If $\eta$ is a constant and $T$  is the identity operator, immediately from \eqref{eq1.1} we notice that operator $\mathscr{L}$ is the Laplacian operator and Problem~\ref{problem1} becomes the following eigenvalue problem for the {\it Dirichlet biharmonic operator }
\begin{equation}\label{problem2}
    \left\{\begin{array}{ccccc} 
    \Delta^2  u  &=& \Gamma u & \mbox{in } & \Omega,\\
     u=\frac{\partial u}{\partial \nu}&=&0 & \mbox{on} & \partial\Omega, 
    \end{array} 
    \right.
\end{equation}
where $\Delta^2$ is the biharmonic operator on $C^\infty(\Omega)$. Problem~\eqref{problem2} is known as the {\it clamped plate problem} which describes the characteristic vibrations of a clamped plate in elastic mechanics. 
{The clamped plate problem goes back to Lord Rayleigh, who formulated an isoperimetric-type problem for the principal eigenvalue (in 1877). This problem has been settled later, in the 1990s, by Nadirashvili~\cite{Nadirashvili} and Ashbaugh and Banguria~\cite{AshbaughBenguria}, in dimensions two and three, in the Euclidean setting, and Ashbaugh and Laugesen~\cite{AshbaughLaugesen} for higher dimensions. Furthermore, very recently, Kristaly~\cite{Kristaly} handled the problem of clamped plates on Riemannian manifolds with negative curvature.} For the eigenvalue problem of the clamped plate problem, some interesting inequalities have been established at works \cite{Cheng-etal1}-\cite{ChengYang5}, \cite{HileYeh}, \cite{Hook}, \cite{Payne-etal}, and \cite{WangXia}, we present some of them in more detail below. 

In the Euclidean space case, that is $\Omega \subset \mathbb{R}^n$, Payne et al.~\cite{Payne-etal} gave the following estimate for the eigenvalues of Problem~\eqref{problem2}
\begin{equation}\label{Payne-etal-estimate}
    \Gamma_{k+1} - \Gamma_k \leq \frac{8(n+2)}{n^2}\frac{1}{k}\sum_{i=1}^k\Gamma_i.
\end{equation}
Later, Hile and Yeh~\cite{HileYeh} proved the following inequality
\begin{equation*}
    \sum_{i=1}^k\frac{\Gamma_i^{\frac{1}{2}}}{\Gamma_{k+1}-\Gamma_i}\geq \frac{n^2k^{\frac{3}{2}}}{8(n+2)} \Bigg(\sum_{i=1}^k\Gamma_i\Bigg)^{-\frac{1}{2}},
\end{equation*}
which generalizes Inequality~\eqref{Payne-etal-estimate}.
Furthermore, Hook\cite{Hook} also established the following estimate
\begin{equation*}
    \frac{n^2k^2}{8(n+2)}\leq \Bigg(\sum_{i=1}^k\Gamma_i\Bigg)^{\frac{1}{2}}\Bigg(\sum_{i=1}^k\frac{\Gamma_i^{\frac{1}{2}}}{\Gamma_{k+1}-\Gamma_i}\Bigg).
\end{equation*}
Ashbaugh asked, in his survey paper \cite{Ashbaugh}, whether it is possible to obtain inequalities for eigenvalues for Problem~\eqref{problem2} that are similar to well-known  Yang's inequality obtained for the Laplacian case by Yang~\cite{Yang}. The question asked by Ashbaugh had a positive answer given by Cheng and Yang~\cite{ChengYang5}, more precisely they proved the following inequality
\begin{equation*}
    \Gamma_{k+1}-\frac{1}{k}\sum_{i=1}^k\Gamma_i \leq \Big(\frac{8(n+2)}{n}\Big)^{\frac{1}{2}} \frac{1}{k}\sum_{i=1}^k\Big(\Gamma_i(\Gamma_{k+1}-\Gamma_i)\Big)^\frac{1}{2}.
\end{equation*}

Cheng et al.~\cite{Cheng-etal} studied the Problem~\eqref{problem2} when $M$ is an $n$-dimensional submanifold isometrically immersed in a Euclidean space. In fact, they proved that if {\bf H} is the mean curvature vector of this immersion, then
\begin{equation}\label{Cheng-etal-estimate}
    \sum_{i=1}^k(\Gamma_{k+1}-\Gamma_i)^2\leq \frac{1}{n^2}\sum_{i=1}^k(\Gamma_{k+1}-\Gamma_i)\Big((2n+4)\Gamma_i^{\frac{1}{2}}+n^2H_0^2\Big)(4\Gamma_i^{\frac{1}{2}}+n^2H_0^2),
\end{equation}
where $H_0=\sup_\Omega |{\bf H}|$. More recently, on the same configurations, Wang and Xia~\cite{WangXia} proved the following inequality
\begin{align}\label{WangXia-estimate}
    \sum_{i=1}^k(\Gamma_{k+1}-\Gamma_i)^2 \leq& \frac{1}{n}\Bigg\{\sum_{i=1}^k(\Gamma_{k+1}-\Gamma_i)^2\Big[(4+2n)\Gamma_i^{\frac{1}{2}} + n^2H_0^2\Big]\Bigg\}^{\frac{1}{2}}\nonumber\\
    &\times\Bigg\{\sum_{i=1}^k(\Gamma_{k+1}-\Gamma_i)\Big(4\Gamma_i^{\frac{1}{2}}+ n^2H_0^2\Big)\Bigg\}^{\frac{1}{2}},
\end{align}
and, using the Reverse Chebyshev Inequality, they showed that Inequality~\eqref{WangXia-estimate} implies Inequality~\eqref{Cheng-etal-estimate}, cf. \cite[Remark~2.2]{WangXia}. 

If $\eta$ is not necessarily constant and $T=I$, from \eqref{eq1.1} the operator $\mathscr{L}$ becomes the {\it drifted Laplacian operator} $\Delta_\eta (\cdot) = \Delta (\cdot) - \langle \nabla \eta, \cdot \rangle$, and we can rewritten the Problem~\eqref{problem1} as follows
\begin{equation}\label{problem3}
    \left\{\begin{array}{ccccc} 
    \Delta_\eta^2  u  &=& \Gamma u & \mbox{in } & \Omega,\\
     u=\frac{\partial u}{\partial \nu}&=&0 & \mbox{on} & \partial\Omega, 
    \end{array} 
    \right.
\end{equation}
where $\Delta_\eta^2$ is the bi-drifting Laplacian operator on $C^\infty(\Omega)$. 

Recently, when $M$ is an $n$-dimensional submanifold isometrically immersed in a Euclidean space with mean curvature vector ${\bf H}$, Du et al.~\cite{Du-etal} established the following eigenvalues inequality of Problem~\eqref{problem3}
\begin{align}\label{Du-etal-estimate}
    \sum_{i=1}^k(\Gamma_{k+1}-\Gamma_i)^2 \leq& \frac{1}{n}\Bigg\{\sum_{i=1}^k(\Gamma_{k+1}-\Gamma_i)^2\Big[(2n+4)\Gamma_i^{\frac{1}{2}} +4\eta_0\Gamma_i^{\frac{1}{4}} + n^2H_0^2+\eta_0^2\Big]\Bigg\}^{\frac{1}{2}}\nonumber\\
    &\times\Bigg\{\sum_{i=1}^k(\Gamma_{k+1}-\Gamma_i)\Big(4\Gamma_i^{\frac{1}{2}}+4\eta_0\Gamma_i^{\frac{1}{4}} + n^2H_0^2+\eta_0^2 \Big)\Bigg\}^{\frac{1}{2}},
\end{align}
where $H_0=\sup_\Omega |{\bf H}|$ and $\eta_0=\max_{\Bar{\Omega}}|\nabla \eta|$. We can obtain Inequality~\eqref{Du-etal-estimate} as an application of one of our results, cf. Theorem~\ref{Generalization-Duetal-teo} in Section~\ref{concludingremarks}. 

We would like to observe that one can impose restrictions on the weighted Ricci curvature (for example, weighted Ricci curvature nonnegative) to get estimates for the first eigenvalue of Problem~\eqref{problem3}, see e.g. Araújo Filho~\cite{AraujoFilho} and references therein.

Another interesting problem is to found estimates for lower order eigenvalues for Problem~\eqref{problem1}. Some estimates for lower order eigenvalues have been obtained over the years, for Problem~\eqref{problem2} we refer to Cheng et al.~\cite{Cheng-etal1} and their references and for Problem~\eqref{problem3} we refer Du et al.~\cite{Du-etal} and their references.

Some of the inequalities for eigenvalues presented above are included in our results, but before presenting our theorems, we first denote by ${\bf H}_T$ the generalized mean curvature vector associated with $(1, 1)$-tensor $T$, see Section~\ref{preliminaries} for more details. The definition of the generalized mean curvature vector was considered by Grosjean~\cite{Grosjean} and Roth~\cite{Roth2}. Moreover, since the $(1, 1)$-tensor $T$ is symmetric and positive definite and $\Omega$ is a bounded domain in Problem~\ref{problem1}, there are positive real numbers $\varepsilon$ and $\delta$ such that $\varepsilon I \leq T \leq \delta I$ where $I$ is the identity $(1, 1)$-tensor. In this most general setting of Problem~\eqref{problem1}, we apply known techniques to prove our results. 

\begin{theorem}\label{theorem1.1}
Let $\Omega$ be a bounded domain in an $n$-dimensional complete Riemannian manifold $M^n$ isometrically immersed in $\mathbb{R}^m$. Denote by $\Gamma_i$ the $i$-th eigenvalue  of Problem~\eqref{problem1}, then we have
\begin{align}\label{theorem1-estimate1}
    \sum_{i=1}^k&(\Gamma_{k+1}-\Gamma_i)^2\nonumber\\
    \leq& \frac{4}{n\varepsilon}\Bigg\{\sum_{i=1}^k(\Gamma_{k+1}-\Gamma_i)^2\Big[\Big(\sqrt{\delta \Gamma_i^{\frac{1}{2}}}  + \frac{T_0}{2}\Big)^2 +\frac{n\delta \Gamma_i^{\frac{1}{2}}}{2} + \frac{n^2H_0^2+4C_0+2T_0\delta\eta_0}{4} \Big]\Bigg\}^{\frac{1}{2}}\nonumber\\
    &\times\Bigg\{\sum_{i=1}^k(\Gamma_{k+1}-\Gamma_i)\Big[\Big(\sqrt{\delta \Gamma_i^{\frac{1}{2}}}  + \frac{T_0}{2}\Big)^2 + \frac{n^2H_0^2+4C_0+2T_0\delta\eta_0}{4} \Big]\Bigg\}^{\frac{1}{2}},
\end{align}
and
\begin{align}\label{theorem1-estimate2}
    \sum_{i=1}^k(\Gamma_{k+1}-\Gamma_1)^\frac{1}{2}\leq& \frac{1}{\varepsilon}\Bigg[4\Big(\sqrt{\delta \Gamma_1^{\frac{1}{2}}}  + \frac{T_0}{2}\Big)^2 + n^2H_0^2+4C_0+2T_0\delta\eta_0\Bigg]^{\frac{1}{2}}\nonumber\\
    &\times\Bigg[4\Big(\sqrt{\delta \Gamma_1^{\frac{1}{2}}}  + \frac{T_0}{2}\Big)^2 + 2n\delta \Gamma_1^{\frac{1}{2}} + n^2H_0^2+4C_0+2T_0\delta\eta_0\Bigg]^{\frac{1}{2}},
\end{align}
where
\begin{equation}\label{C_0}
    C_0=\sup_\Omega \Big\{\frac{1}{2}\dv (T^2(\nabla \eta)) - \frac{1}{4}|T(\nabla \eta)|^2\Big\},
\end{equation}
$T_0=\sup_{\Omega}|\tr(\nabla T)|$, $\eta_0=\sup_{\Omega}|\nabla \eta|$, $H_0=\sup_\Omega |{\bf H}_T|$ and ${\bf H}_T$ is the generalized mean curvature vector of the immersion.
\end{theorem}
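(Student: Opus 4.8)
The plan is to follow the classical Payne--P\'olya--Weinberger--Yang commutator scheme, adapted to the fourth-order operator $\mathscr{L}^2$ and to the weighted divergence structure. Let $u_i$ denote an $L^2(dm)$-orthonormal system of eigenfunctions for Problem~\eqref{problem1}, and let $x^1,\dots,x^m$ be the coordinate functions of the isometric immersion $M^n\hookrightarrow\mathbb{R}^m$, viewed as functions on $\Omega$. For each fixed $\alpha\in\{1,\dots,m\}$ I would use the trial functions
\[
\varphi_i^\alpha = x^\alpha u_i - \sum_{j=1}^k a_{ij}^\alpha u_j,\qquad a_{ij}^\alpha=\int_\Omega x^\alpha u_i u_j\,dm,
\]
which are admissible for the Rayleigh quotient of $\mathscr{L}^2$ because the subtraction makes $\varphi_i^\alpha$ orthogonal to $u_1,\dots,u_k$ and the boundary conditions $u=\partial u/\partial\nu_T=0$ are preserved by multiplication by the smooth function $x^\alpha$. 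The Rayleigh--Ritz inequality then gives, for every $i\le k$,
\[
(\Gamma_{k+1}-\Gamma_i)\int_\Omega (\varphi_i^\alpha)^2\,dm \le \int_\Omega \varphi_i^\alpha\,\mathscr{L}^2\varphi_i^\alpha\,dm - \Gamma_i\int_\Omega(\varphi_i^\alpha)^2\,dm,
\]
and the right-hand side is rewritten using the commutator identity $\mathscr{L}^2(x^\alpha u_i)-x^\alpha\mathscr{L}^2 u_i = [\mathscr{L}^2,x^\alpha]u_i$.

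The computational heart of the argument is expanding this commutator. Since $\mathscr{L} f=\dv_\eta(T(\nabla f))$, one has $\mathscr{L}(x^\alpha u)=x^\alpha\mathscr{L}u + u\,\mathscr{L}x^\alpha + 2\langle T(\nabla x^\alpha),\nabla u\rangle$; iterating gives $\mathscr{L}^2(x^\alpha u)$ in terms of $x^\alpha\mathscr{L}^2u$ plus lower-order terms involving $\mathscr{L}x^\alpha$, $T(\nabla x^\alpha)$, $\nabla(\mathscr{L}u)$, the Hessian of $u$ contracted with $T$, and the auxiliary quantities $\tr(\nabla T)$, $\nabla\eta$ and $\dv(T^2(\nabla\eta))$ that define $T_0$, $\eta_0$ and $C_0$. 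Here $\mathscr{L}x^\alpha$ is controlled by the generalized mean curvature vector $\mathbf{H}_T$ of the immersion (this is exactly why $H_0=\sup_\Omega|\mathbf{H}_T|$ enters), using the formula relating $\sum_\alpha (\mathscr{L}x^\alpha)^2$ and $\sum_\alpha|T(\nabla x^\alpha)|^2$ to $\mathrm{tr}(T^2)$, $n^2|\mathbf{H}_T|^2$ and the bounds $\varepsilon I\le T\le\delta I$. After summing the Rayleigh--Ritz inequalities over $\alpha$ and over $i$ (weighted by powers of $\Gamma_{k+1}-\Gamma_i$), integration by parts against $dm$ collapses the cross terms, producing on one side $\sum_\alpha\sum_i\|\varphi_i^\alpha\|^2$ bounded below by a constant like $(n/\delta)\sum_i(\ldots)$ and on the other side the quantities $\int_\Omega\langle T(\nabla x^\alpha),\nabla(\mathscr{L}u_i)\rangle\,\varphi_i^\alpha\,dm$ etc., which after Cauchy--Schwarz and the identity $\int_\Omega u_i\mathscr{L}^2u_i=\Gamma_i$, $\int_\Omega(\mathscr{L}u_i)^2=\Gamma_i$ become $\Gamma_i^{1/2}$ and $\Gamma_i^{1/4}$ terms. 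This is precisely where the peculiar groupings $(\sqrt{\delta\Gamma_i^{1/2}}+T_0/2)^2$ and the constants $n^2H_0^2+4C_0+2T_0\delta\eta_0$ in \eqref{theorem1-estimate1} and \eqref{theorem1-estimate2} originate.

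To finish \eqref{theorem1-estimate1} I would follow the Cheng--Yang ``recursion'' trick: after deriving a quadratic-type inequality of the form $\sum_i(\Gamma_{k+1}-\Gamma_i)^2\le \sum_i(\Gamma_{k+1}-\Gamma_i)[A_i + B_i(\Gamma_{k+1}-\Gamma_i)^{1/2}\cdot(\ldots)]$, apply the Cauchy--Schwarz inequality in the split form
\[
\sum_i(\Gamma_{k+1}-\Gamma_i)^2 = \sum_i(\Gamma_{k+1}-\Gamma_i)^{3/2}\cdot(\Gamma_{k+1}-\Gamma_i)^{1/2}
\]
together with the estimate on $\sum_i(\Gamma_{k+1}-\Gamma_i)\langle T\nabla x^\alpha,\nabla\mathscr{L}u_i\rangle$-type sums, to separate the two factors appearing on the right of \eqref{theorem1-estimate1}; here the weight $\varepsilon$ comes from estimating $\|\varphi_i^\alpha\|^2$ from below using $T\ge\varepsilon I$ when bounding $\sum_\alpha|\nabla x^\alpha|^2$-contractions, while $\delta$ comes from the upper bound $T\le\delta I$ in the terms carrying $\Gamma_i^{1/2}$. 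For \eqref{theorem1-estimate2} I take $k=1$: then $\sum_{i=1}^1$ reduces everything to a single inequality in $\Gamma_2,\dots$ against $\Gamma_1$, and summing the one-term Rayleigh--Ritz bound over $\alpha$ directly yields the product form with the square roots, after dividing by $(\Gamma_{k+1}-\Gamma_1)^{1/2}$ where legitimate and using positivity of $T$ to control $\sum_\alpha\|\varphi_1^\alpha\|^2$ from below.

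The main obstacle I anticipate is the bookkeeping in the commutator $[\mathscr{L}^2,x^\alpha]$: unlike the constant-coefficient biharmonic case, $\mathscr{L}$ carries both the non-trivial tensor $T$ (whose covariant derivative contributes the $T_0$ terms) and the weight $e^{-\eta}$ (whose derivatives contribute $\eta_0$ and, through the second iteration, the divergence term inside $C_0$), so the cross terms do not cancel as cleanly as in the Euclidean clamped-plate setting, and one must carefully track which terms are genuinely of lower order (and hence absorbable via Cauchy--Schwarz with a free parameter) versus which must survive into the final constants. Verifying that all the seemingly miraculous completions of squares — e.g. $\delta\Gamma_i^{1/2}+T_0\sqrt{\delta\Gamma_i^{1/2}}+T_0^2/4=(\sqrt{\delta\Gamma_i^{1/2}}+T_0/2)^2$ — are forced by the algebra rather than ad hoc is the delicate step, and is where I would spend most of the effort.
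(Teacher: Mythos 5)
Your plan for the first inequality \eqref{theorem1-estimate1} is essentially the paper's argument: trial functions $\phi_i=fu_i-\sum_j a_{ij}u_j$ with $f$ a coordinate function of the immersion, Rayleigh--Ritz, expansion of the commutator $[\mathscr{L}^2,f]$, summation over coordinates using $\sum_r T(\nabla x_r,\nabla x_r)=\tr(T)\ge n\varepsilon$ and $\sum_r(\mathscr{L}x_r)^2=n^2|\mathbf{H}_T|^2+|\tr(\nabla T)-T(\nabla\eta)|^2$, and finally optimization of a free parameter $B$ in an inequality of the form $n\varepsilon\sum_i\Lambda_i^2\le 4B\sum_i\Lambda_i^2P_i+\tfrac{1}{B}\sum_i\Lambda_iQ_i$, which yields the product of the two square-root factors. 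That outline is sound, and the completions of squares you worry about do come out of the term-by-term bounds on $q_i$ exactly as you anticipate.

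There is, however, a genuine gap in your treatment of the second inequality \eqref{theorem1-estimate2}. Despite the typo in its statement, this is a \emph{lower-order eigenvalue} estimate: the quantity actually proved is $\sum_{i=1}^{n}(\Gamma_{i+1}-\Gamma_1)^{1/2}\le(\cdots)$, i.e.\ a simultaneous bound on the gaps $\Gamma_2-\Gamma_1,\dots,\Gamma_{n+1}-\Gamma_1$ in terms of $\Gamma_1$ alone. Setting $k=1$ in the first-type argument cannot produce this: it only compares a single trial function against $\Gamma_2$ and gives information about $\Gamma_2-\Gamma_1$, not about $\Gamma_{i+1}-\Gamma_1$ for each $i$ up to $n$. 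The mechanism the paper uses (item~ii) of its key lemma) is different: one takes trial functions $\psi_i=(h_i-a_i)u_1$ built from the \emph{first} eigenfunction only, where $h_i=\sum_k q_{ik}x_k$ are rotated coordinate functions obtained from a QR (Gram--Schmidt) factorization of the matrix $b_{ij}=\int_\Omega x_i u_1 u_{j+1}\,dm$, so that $\int_\Omega h_i u_1 u_{j+1}\,dm=0$ for $j<i$. This orthogonality lets one test the Rayleigh quotient of $\psi_i$ against $\Gamma_{i+1}$ (not $\Gamma_{k+1}$) for each $i$ separately; summing over $i$ and using $\sum_i|\nabla h_i|^2=n$ together with the monotonicity rearrangement $\sum_{i=1}^{m}(\Gamma_{i+1}-\Gamma_1)^{1/2}|\nabla h_i|^2\ge\sum_{i=1}^{n}(\Gamma_{i+1}-\Gamma_1)^{1/2}$ then gives \eqref{theorem1-estimate2}. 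Without this orthogonalization step and the $(h_i-a_i)u_1$ trial functions, the second inequality is not reachable from your scheme.
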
 
\begin{remark}
Notice that Theorem~\ref{theorem1.1} is the strengthens Inequality~(1.14) in \cite{WangXia}. In fact, if $T=I$ and $\eta$ is constant we have $\varepsilon=\delta=1, T_0 = C_0 = 0$ and ${\bf H}_T={\bf H}$ is the mean curvature vector of the immersion, consequently the first inequality of Theorem~\ref{theorem1.1} becomes Inequality~\eqref{WangXia-estimate} obtained by Wang and Xia~\cite{WangXia} for Problem~\ref{problem2}.
\end{remark}

We would like to emphasize a geometric interpretation for $C_0$ in \eqref{C_0}, initially observed by Wang~\cite{Wang} and mentioned by Gomes and Araújo Filho\cite{GomesAraujoFilho},
that $C_0$ can be seen as the supremum of the scalar curvature on the warped product $\Omega\times_{e^{-\frac{\eta}{2}}}\mathbb{S}^1$. Moreover, notice that the constant $C_0$ depends only on the potential function $\eta$ and it is natural to ask if there is some example of such a potential function where eigenvalue estimates do not depend on $C_0$. In the second part of our Corollary~\ref{gaussian-corollary}, we give a positive answer to this question, that is, we give an example where the estimates do not depend on the constant $C_0$.

When $\dv{T} = 0$ the $(1,1)$-tensor $T$ is called {\it divergence-free} or, for reasons of physical conservation laws, is called a {\it locally conserved} tensor (Cf. Gover and Orsted~\cite{GoverOrsted}). Divergence-free tensors are important in physical facts and appear naturally in fluid dynamics, for instance, in the study of compressible gas; rarefied gas; steady/self-similar flows, and relativistic gas dynamics, for more details see Serre~\cite{Serre}.
\begin{example}
Let $f$ be a smooth function on a Riemannian manifold $(M, \langle , \rangle)$ and define
\begin{equation*}
    T_f:= df \otimes df - \frac{|\nabla f |^2}{2}\langle , \rangle.
\end{equation*}
When $\Delta f =0$, we can see that the symmetric tensor $T_f$ is divergence-free.
\end{example}
\begin{example}
 Let $(M^n, \langle , \rangle)$ be an $n (\geq 3)$-dimensional Einstein manifold, that is, $\rc=\rho \langle , \rangle$ for some constant $\rho$. If $\rho >0$, then $\rc$ is a tensor symmetric and positive definite which is divergence-free. Moreover, if $\rho<0$ the Einstein tensor 
 \begin{equation*}
    E = \rc - \frac{R_0}{2}\langle , \rangle,
\end{equation*}
where $\rc$ is the Ricci tensor and $R_0=\tr(\rc)$, is a tensor symmetric and positive definite which is divergence-free.
\end{example}

Cheng and Yau~\cite{ChengYau} introduced a differential operator appropriate for the study of complete hypersurfaces of constant scalar curvature in space forms, namely
\begin{equation*}
    \square f = \tr{(\nabla^2f \circ T)}=\langle \nabla^2 f, T\rangle,
\end{equation*}
where $f\in C^\infty(M)$ and $T$ is a symmetric $(1, 1)$-tensor.
In fact, with a careful study of this operator, using a divergence-free tensor, Cheng and Yau obtained remarkable rigidity results for such hypersurfaces. It is worth mentioning the relationship, observed by Gomes and Miranda~\cite{GomesMiranda}, between operator $\mathscr{L}$ and operator $\square$, that is, we can see that operator $\mathscr{L}$ is a first-order perturbation of the  Cheng-Yau's operator. In fact, when $T$ is divergence-free, from \cite[Eq.~2.3]{GomesMiranda} the $\mathscr{L}$ operator becomes
\begin{equation*}
    \mathscr{L}f = \square f - \langle\nabla \eta, T(\nabla f)\rangle.
\end{equation*}
Therefore, it is a {\it drifted Cheng-Yau operator} with drifting function $\eta$. Hence, when $T$ is divergence-free, let us call the $\mathscr{L}^2$ operator the {\it bi-drifted Cheng-Yau operator}.

Now let us apply our results to the {\it drifted Cheng-Yau operator}. For this, notice that if $T$ is divergence-free, then $\tr{(\nabla T)=0}$ and so $T_0=0$ (cf. Section~\ref{preliminaries}), which, combining with Theorem~\ref{theorem1.1} implies the following. 
\begin{cor}\label{cor-T-divergence-free} 
Let $\Omega$ be a bounded domain in an $n$-dimensional complete Riemannian manifold $M^n$ isometrically immersed in $\mathbb{R}^m$. Denote by $\Gamma_i$ the $i$-th eigenvalue  of the Dirichlet problem for the bi-drifted Cheng-Yau operator, then we have
\begin{align}\label{Equation-1.4}
    \sum_{i=1}^k(\Gamma_{k+1}-\Gamma_i)^2 \leq& \frac{1}{n\varepsilon}\Bigg\{\sum_{i=1}^k(\Gamma_{k+1}-\Gamma_i)^2\Big[(4+2n)\delta\Gamma_i^{\frac{1}{2}} + n^2H_0^2+4C_0 \Big]\Bigg\}^{\frac{1}{2}}\nonumber\\
    &\times\Bigg\{\sum_{i=1}^k(\Gamma_{k+1}-\Gamma_i)\Big(4\delta \Gamma_i^{\frac{1}{2}}+ n^2H_0^2+4C_0 \Big)\Bigg\}^{\frac{1}{2}},
\end{align}
and
\begin{align}\label{Equation-1.14}
    \sum_{i=1}^k(\Gamma_{k+1}-\Gamma_1)^\frac{1}{2}\leq& \frac{1}{\varepsilon}\Bigg[\Big(4\delta \Gamma_1^{\frac{1}{2}} + n^2H_0^2+4C_0\Big)\Big((4 + 2n)\delta \Gamma_1^{\frac{1}{2}} + n^2H_0^2+4C_0\Big)\Bigg]^{\frac{1}{2}},
\end{align}
where $C_0$ is given by \eqref{C_0}.
\end{cor}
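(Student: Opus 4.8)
The plan is to obtain Corollary~\ref{cor-T-divergence-free} as a direct specialization of Theorem~\ref{theorem1.1} to divergence-free tensors: the extra hypothesis $\dv T = 0$ forces the auxiliary constant $T_0$ to vanish, and inequalities \eqref{theorem1-estimate1}--\eqref{theorem1-estimate2} then collapse, after elementary algebra, to the cleaner forms \eqref{Equation-1.4}--\eqref{Equation-1.14}.

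First I would recall that when $T$ is divergence-free we have $\dv T = 0$, and since for a symmetric $(1,1)$-tensor the trace $\tr(\nabla T)$ coincides (after the usual metric identification) with $\dv T$, it follows that $\tr(\nabla T) \equiv 0$ on $M$, whence $T_0 = \sup_\Omega|\tr(\nabla T)| = 0$; this is exactly the computation carried out in Section~\ref{preliminaries}. In this situation, by the discussion preceding the corollary, the operator $\mathscr{L}^2$ in Problem~\eqref{problem1} is precisely the bi-drifted Cheng--Yau operator, so its Dirichlet eigenvalues are exactly the $\Gamma_i$ to which Theorem~\ref{theorem1.1} applies, and we are entitled to substitute $T_0 = 0$ into \eqref{theorem1-estimate1} and \eqref{theorem1-estimate2}.

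Next I would carry out this substitution in \eqref{theorem1-estimate1}. The squared term $\big(\sqrt{\delta\Gamma_i^{1/2}} + \tfrac{T_0}{2}\big)^2$ reduces to $\delta\Gamma_i^{1/2}$ and the contribution $2T_0\delta\eta_0$ disappears, so the first bracket becomes $\delta\Gamma_i^{1/2} + \tfrac{n\delta\Gamma_i^{1/2}}{2} + \tfrac{n^2H_0^2+4C_0}{4} = \tfrac{(2+n)\delta\Gamma_i^{1/2}}{2} + \tfrac{n^2H_0^2+4C_0}{4}$ and the second becomes $\delta\Gamma_i^{1/2} + \tfrac{n^2H_0^2+4C_0}{4}$. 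Writing $\tfrac{4}{n\varepsilon} = \tfrac{1}{n\varepsilon}\sqrt{4}\,\sqrt{4}$ and absorbing one factor $4$ into each square root multiplies the first bracketed quantity by $4$, giving $(4+2n)\delta\Gamma_i^{1/2} + n^2H_0^2 + 4C_0$, and the second, giving $4\delta\Gamma_i^{1/2} + n^2H_0^2 + 4C_0$; this is precisely \eqref{Equation-1.4}. The same substitution in \eqref{theorem1-estimate2} turns $4\big(\sqrt{\delta\Gamma_1^{1/2}} + \tfrac{T_0}{2}\big)^2$ into $4\delta\Gamma_1^{1/2}$ and kills $2T_0\delta\eta_0$, so the first factor becomes $4\delta\Gamma_1^{1/2} + n^2H_0^2 + 4C_0$ and the second becomes $4\delta\Gamma_1^{1/2} + 2n\delta\Gamma_1^{1/2} + n^2H_0^2 + 4C_0 = (4+2n)\delta\Gamma_1^{1/2} + n^2H_0^2 + 4C_0$, which is \eqref{Equation-1.14}.

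There is essentially no analytic obstacle here: the entire content of the corollary already resides in Theorem~\ref{theorem1.1}, and the only points requiring care are the identification $\tr(\nabla T) = \dv T$ for symmetric tensors (so that divergence-freeness genuinely yields $T_0 = 0$) and the bookkeeping of how the factor $4$ in front of \eqref{theorem1-estimate1} is redistributed inside the two square roots. Accordingly I would present the argument in exactly this order: first reduce $T_0$ to zero, then simplify each of the two inequalities by the elementary manipulations above.
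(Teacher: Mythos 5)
Your proposal is correct and is exactly the argument the paper intends: the text immediately preceding the corollary notes that $\dv T=0$ forces $\tr(\nabla T)=0$, hence $T_0=0$, and then simply substitutes into Theorem~\ref{theorem1.1}. Your bookkeeping of the factor $\frac{4}{n\varepsilon}$ (distributing $\sqrt{4}\cdot\sqrt{4}$ into the two square roots so that $2\cdot\frac{(2+n)\delta\Gamma_i^{1/2}}{2}=(4+2n)\delta\Gamma_i^{1/2}$, etc.) checks out and reproduces \eqref{Equation-1.4} and \eqref{Equation-1.14} exactly.
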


Using an algebraic lemma obtained by Jost et al.~\cite[Lemma~2.3]{Jost} and \eqref{Equation-1.4}, we get the next result.
\begin{cor}\label{cor-T-divergence-free2} 
Under the same setup as in Corollary~\ref{cor-T-divergence-free}, we have
\begin{align}\label{Equation-1.6}
    &\sum_{i=1}^k(\Gamma_{k+1}-\Gamma_i)^2\nonumber \\
    &\leq\frac{1}{n^2\varepsilon^2}\sum_{i=1}^k(\Gamma_{k+1}-\Gamma_i)\Big[(4+2n\delta)\Gamma_i^{\frac{1}{2}} + n^2H_0^2+4C_0 \Big]\Big(4\delta \Gamma_i^{\frac{1}{2}}+ n^2H_0^2+4C_0 \Big),
\end{align}
\end{cor}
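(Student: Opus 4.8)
The plan is to deduce \eqref{Equation-1.6} from the already-proven inequality \eqref{Equation-1.4} by an elementary rearrangement argument, which is precisely the role of the algebraic lemma \cite[Lemma~2.3]{Jost}. Write $\mu_i := \Gamma_{k+1}-\Gamma_i\ge 0$, and let $p_i$ and $q_i$ denote the bracketed factors appearing in the first and second sums on the right-hand side of \eqref{Equation-1.4}, so that Corollary~\ref{cor-T-divergence-free} reads
\begin{equation*}
  \sum_{i=1}^{k}\mu_i^2\;\le\;\frac{1}{n\varepsilon}\Bigl(\sum_{i=1}^{k}\mu_i^2 p_i\Bigr)^{1/2}\Bigl(\sum_{i=1}^{k}\mu_i q_i\Bigr)^{1/2},
\end{equation*}
where both $p_i$ and $q_i$ are affine functions of $\Gamma_i^{1/2}$ with nonnegative coefficients. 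We may assume $\Gamma_{k+1}>\Gamma_1$, since otherwise both sides of \eqref{Equation-1.6} vanish, and we may discard any index with $\Gamma_i=\Gamma_{k+1}$, as such terms contribute zero to every sum involved; hence $\mu_i>0$ throughout. Squaring the displayed inequality gives
\begin{equation*}
  n^2\varepsilon^2\Bigl(\sum_{i=1}^{k}\mu_i^2\Bigr)^{2}\;\le\;\Bigl(\sum_{i=1}^{k}\mu_i^2 p_i\Bigr)\Bigl(\sum_{i=1}^{k}\mu_i q_i\Bigr),
\end{equation*}
so it suffices to bound the right-hand side above by $\bigl(\sum_i\mu_i^2\bigr)\bigl(\sum_i\mu_i p_i q_i\bigr)$ and then cancel the positive factor $\sum_i\mu_i^2$.

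For this last step I would apply \cite[Lemma~2.3]{Jost}, a weighted Chebyshev-type rearrangement inequality which in the relevant form states that for positive weights $w_i$ and sequences $b_i, c_i$ that are monotone in the same direction one has $\bigl(\sum_i w_i\bigr)\bigl(\sum_i w_i b_i c_i\bigr)\ge\bigl(\sum_i w_i b_i\bigr)\bigl(\sum_i w_i c_i\bigr)$. Choosing $w_i=\mu_i^2$, $b_i=p_i$ and $c_i=q_i/\mu_i$ converts the three sums into $\sum_i\mu_i^2 p_i$, $\sum_i\mu_i q_i$ and $\sum_i\mu_i p_i q_i$ respectively, the last being exactly the sum on the right of \eqref{Equation-1.6}. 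Thus $\bigl(\sum_i\mu_i^2 p_i\bigr)\bigl(\sum_i\mu_i q_i\bigr)\le\bigl(\sum_i\mu_i^2\bigr)\bigl(\sum_i\mu_i p_i q_i\bigr)$, which together with the squared form of \eqref{Equation-1.4} and cancellation of $\sum_i\mu_i^2$ yields $n^2\varepsilon^2\sum_i\mu_i^2\le\sum_i\mu_i p_i q_i$, i.e.\ \eqref{Equation-1.6}.

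The only point that genuinely requires checking --- and the one I expect to be the main (if minor) obstacle --- is the monotonicity hypothesis of the lemma, namely that the sequences $\{p_i\}$ and $\{q_i/\mu_i\}$ are similarly ordered. Both turn out to be nondecreasing in $i$: since $0<\Gamma_1\le\cdots\le\Gamma_{k+1}$, the quantity $\Gamma_i^{1/2}$ is nondecreasing, and hence so are $p_i$ and $q_i$; moreover $\mu_i=\Gamma_{k+1}-\Gamma_i$ is positive and nonincreasing, so $1/\mu_i$, and therefore $q_i/\mu_i$, is nondecreasing. With this in hand the lemma applies and the proof is complete. (The same passage from \eqref{Equation-1.4} to \eqref{Equation-1.6} could also be carried out via the reverse Chebyshev inequality, in the spirit of Wang and Xia.)
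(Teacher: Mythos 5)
Your proof is correct and follows essentially the same route as the paper: square \eqref{Equation-1.4} and apply the Jost-type rearrangement inequality, and indeed your weighted-Chebyshev substitution $w_i=\mu_i^2$, $b_i=p_i$, $c_i=q_i/\mu_i$ (with the monotonicity check that $q_i/\mu_i$ is nondecreasing because $\mu_i$ is positive and nonincreasing) is precisely the content of \cite[Lemma~2.3]{Jost}, which the paper quotes verbatim with $a_i=\Gamma_{k+1}-\Gamma_i$. The only discrepancy is notational: what you (and the paper's own proof) actually establish carries the coefficient $(4+2n)\delta\,\Gamma_i^{1/2}$ inherited from \eqref{Equation-1.4}, whereas the displayed statement \eqref{Equation-1.6} reads $(4+2n\delta)\Gamma_i^{1/2}$ --- an apparent typo in the corollary rather than a gap in your argument.
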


Since Inequality~\eqref{Equation-1.6} is a quadratic inequality of $\Gamma_{k+1}$, solving it we can to obtain an upper bound on $\Gamma_{k+1}$ in terms of the first $k$ eigenvalues and $H_0^2$.
\begin{cor}\label{cor-T-divergence-free3} 
Under the same setup as in Corollary~\ref{cor-T-divergence-free}, we have 
\begin{align}\label{Equation-1.7}
    \Gamma_{k+1}\leq A_k + \sqrt{A_k^2-B_k},
\end{align}
in particular, we have
\begin{align}\label{Equation-1.8}
     \Gamma_{k+1} - \Gamma_k \leq 2\sqrt{A_k^2-B_k},
\end{align}
where 
\begin{align*}
    A_k = \frac{1}{k}\Bigg\{\sum_{i=1}^k\Gamma_i + \frac{1}{2n^2\varepsilon^2}\sum_{i=1}^k\Big[(4+2n)\delta\Gamma_i^{\frac{1}{2}} + n^2H_0^2+4C_0 \Big]\Big(4\delta \Gamma_i^{\frac{1}{2}}+ n^2H_0^2+4C_0 \Big)\Bigg\},
\end{align*}
and
\begin{align*}
    B_k = \frac{1}{k}\Bigg\{\sum_{i=1}^k\Gamma_i^2 + \frac{1}{n^2\varepsilon^2}\sum_{i=1}^k\Gamma_i\Big[(4+2n)\delta\Gamma_i^{\frac{1}{2}} + n^2H_0^2+4C_0 \Big]\Big(4\delta \Gamma_i^{\frac{1}{2}}+ n^2H_0^2+4C_0 \Big)\Bigg\}.
\end{align*}
\end{cor}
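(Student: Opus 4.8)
The plan is to treat Inequality~\eqref{Equation-1.6} of Corollary~\ref{cor-T-divergence-free2} as a quadratic inequality in the single real unknown $\Gamma_{k+1}$ and to extract both conclusions from it. Abbreviate, for $1\le i\le k$,
\[
a_i:=\Big[(4+2n)\delta\,\Gamma_i^{\frac12}+n^2H_0^2+4C_0\Big]\Big(4\delta\,\Gamma_i^{\frac12}+n^2H_0^2+4C_0\Big)\ge 0,
\]
so that \eqref{Equation-1.6} reads $\sum_{i=1}^k(\Gamma_{k+1}-\Gamma_i)^2\le \frac{1}{n^2\varepsilon^2}\sum_{i=1}^k(\Gamma_{k+1}-\Gamma_i)a_i$. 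First I would expand the two sides (using $\sum_{i=1}^k\Gamma_{k+1}^2=k\Gamma_{k+1}^2$) and gather everything on the left, obtaining
\[
k\Gamma_{k+1}^2-\Big(2\sum_{i=1}^k\Gamma_i+\frac{1}{n^2\varepsilon^2}\sum_{i=1}^k a_i\Big)\Gamma_{k+1}+\Big(\sum_{i=1}^k\Gamma_i^2+\frac{1}{n^2\varepsilon^2}\sum_{i=1}^k\Gamma_i a_i\Big)\le 0.
\]
Dividing by $k$ and comparing the coefficients with the definitions of $A_k$ and $B_k$ in the statement, this is exactly $\Gamma_{k+1}^2-2A_k\Gamma_{k+1}+B_k\le 0$, that is $(\Gamma_{k+1}-A_k)^2\le A_k^2-B_k$. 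In particular $A_k^2-B_k\ge 0$ (otherwise no real number satisfies the inequality), and taking square roots yields $A_k-\sqrt{A_k^2-B_k}\le\Gamma_{k+1}\le A_k+\sqrt{A_k^2-B_k}$; the upper bound is \eqref{Equation-1.7}.

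For \eqref{Equation-1.8} it then remains to prove the matching lower bound $\Gamma_k\ge A_k-\sqrt{A_k^2-B_k}$, because then $\Gamma_{k+1}-\Gamma_k\le\big(A_k+\sqrt{A_k^2-B_k}\big)-\big(A_k-\sqrt{A_k^2-B_k}\big)=2\sqrt{A_k^2-B_k}$. Running the computation of the previous paragraph backwards with $\Gamma_{k+1}$ replaced by $\Gamma_k$, this lower bound is equivalent to $\sum_{i=1}^k(\Gamma_k-\Gamma_i)^2\le\frac{1}{n^2\varepsilon^2}\sum_{i=1}^k(\Gamma_k-\Gamma_i)a_i$. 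Since the $i=k$ summand is zero on both sides, this is precisely Inequality~\eqref{Equation-1.6} applied with $k$ replaced by $k-1$ (and it is the trivial $0\le 0$ for $k=1$), so it holds by Corollary~\ref{cor-T-divergence-free2}. This completes the argument.

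I do not anticipate a genuine obstacle here: once \eqref{Equation-1.6} is in hand the whole thing is elementary algebra. The only steps that call for a little care are the bookkeeping that the linear and constant coefficients of the quadratic really reduce to $2kA_k$ and $kB_k$ for the $A_k,B_k$ printed in the statement, and the observation that the lower bound $\Gamma_k\ge A_k-\sqrt{A_k^2-B_k}$ needed for \eqref{Equation-1.8} is not an extra hypothesis but a consequence of the same inequality at the index $k-1$, the top term $i=k$ contributing nothing.
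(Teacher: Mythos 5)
Your proposal is correct and follows essentially the same route as the paper: solve the quadratic inequality \eqref{Equation-1.6} in $\Gamma_{k+1}$ to get \eqref{Equation-1.7}, then observe that \eqref{Equation-1.6} at index $k-1$ (padded by the vanishing $i=k$ term) forces $\Gamma_k\geq A_k-\sqrt{A_k^2-B_k}$, whence \eqref{Equation-1.8}. Your version simply makes explicit the coefficient bookkeeping that the paper leaves implicit.
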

\begin{remark}
It should be mentioned that \eqref{Equation-1.6} generalizes Inequality~\eqref{Cheng-etal-estimate} and Corollary~\ref{cor-T-divergence-free3} generalizes \cite[Corolaries~1 and 2]{Cheng-etal} both obtained by Cheng et al. in \cite{Cheng-etal}. This can be seen by taking $T = I$ and $\eta = constant$ in our result. In addition, in a sense, our Corollary~\eqref{cor-T-divergence-free} generalizes some results obtained by Du et al.~\cite{Du-etal} for the drifted Laplacian, see concluding remarks in Section~\ref{concludingremarks}.
\end{remark}

Now, let us apply some of our results to Gaussian shrinking soliton. For this, remember that the Bakry-Emery Ricci curvature on $M$ is  defined by
\begin{equation*}
    \rc_\eta = \rc + \nabla^2 \eta,
\end{equation*}
where $\rc$ is the Ricci curvature on $M$ and $\nabla^2$ is the Hessian operator. Therefore, the triple $(M, \langle , \rangle, \eta)$ is called a gradient Ricci soliton when the Bakry-Emery Ricci curvature satisfies the equation $\rc_\eta = \lambda\langle , \rangle$, for some constant $\lambda$. The gradient Ricci soliton is classified according to the  sign of $\lambda$, that is, it is called steady for $\lambda = 0$, shrinking for $\lambda > 0$ and expanding for $\lambda<0$.
\begin{example}
The Gaussian shrinking soliton is $(\mathbb{R}^m, \langle , \rangle_{can},\frac{1}{4}|x|^2)$, where $\langle , \rangle_{can}$ is the standard Euclidean metric on $\mathbb{R}^m$, $\eta = \frac{1}{4}|x|^2$ and $x\in\mathbb{R}^m$. In this case, since $\eta = \frac{1}{4}|x|^2$ we have $\Delta \eta = \frac{m}{2}$ and $|\nabla \eta|^2=\frac{1}{4}|x|^2$ hence we can get
\begin{equation*}
    C_0 = \sup_\Omega \Big\{\frac{1}{2}\Delta \eta - \frac{1}{4}|\nabla \eta|^2\Big\} \leq \frac{m}{4}-\frac{1}{16}\min_{\bar{\Omega}} \{|x|^2\},
\end{equation*}
in the case where $T=I$.
\end{example}

From the previous inequality and Corollary~\ref{cor-T-divergence-free} we can to obtain the following eigenvalue estimate of Problem~\eqref{problem3} on the Gaussian shrinking soliton. 
\begin{cor}\label{gaussian-corollary}
Let $\Omega$ be a connected bounded domain in the Gaussian shrinking soliton $(\mathbb{R}^n, \langle , \rangle_{can}, \frac{1}{4}|x|^2)$, and let $\lambda_i$ be the i-th eigenvalue of Problem~\ref{problem3}, that is,
\begin{equation*}
    \left\{\begin{array}{ccccc} 
    \Delta_\eta^2  u  &=&  \Gamma u & \mbox{in } & \Omega,\\
     u=\frac{\partial u}{\partial \nu}&=&0 & \mbox{on} & \partial\Omega. 
    \end{array} 
    \right.
\end{equation*}
then we have
\begin{align*}
    \sum_{i=1}^k(\Gamma_{k+1}-\Gamma_i)^2 \leq& \frac{1}{n}\Bigg\{\sum_{i=1}^k(\Gamma_{k+1}-\Gamma_i)^2\Big[(4+2n)\Gamma_i^{\frac{1}{2}} +\Big(m-\frac{1}{4}\min_{\bar{\Omega}} \{|x|^2\}\Big)\Big]\Bigg\}^{\frac{1}{2}}\nonumber\\
    &\times\Bigg\{\sum_{i=1}^k(\Gamma_{k+1}-\Gamma_i)\Big[4 \Gamma_i^{\frac{1}{2}}+\Big(m-\frac{1}{4}\min_{\bar{\Omega}} \{|x|^2\} \Big)\Big]\Bigg\}^{\frac{1}{2}},
\end{align*}
and
\begin{align*}
    \sum_{i=1}^k&(\Gamma_{i+1}-\Gamma_1)^\frac{1}{2}\\
    &\leq \Bigg\{\Big[4 \Gamma_1^{\frac{1}{2}} + \Big(m-\frac{1}{4}\min_{\bar{\Omega}} \{|x|^2\}\Big)\Big]\Big[(4 + 2n) \Gamma_1^{\frac{1}{2}} + \Big(m-\frac{1}{4}\min_{\bar{\Omega}} \{|x|^2\}\Big)\Big]\Bigg\}^{\frac{1}{2}}.
\end{align*}
In particular, if $\Omega = \Big\{ x \in \mathbb{R}^n; 4m < |x|^2 < r_0^2 \Big\}$ for any real constant $r_0^2 > 4m$, then we have
\begin{align}\label{Equation-(1.17)}
    \sum_{i=1}^k(\Gamma_{k+1}-\Gamma_i)^2 \leq& \frac{1}{n}\Bigg\{\sum_{i=1}^k(\Gamma_{k+1}-\Gamma_i)^2(4+2n)\Gamma_i^{\frac{1}{2}}\Bigg\}^{\frac{1}{2}}\Bigg\{\sum_{i=1}^k(\Gamma_{k+1}-\Gamma_i)4 \Gamma_i^{\frac{1}{2}}\Bigg\}^{\frac{1}{2}},
\end{align}
and
\begin{align}\label{Equation-(1.18)}
    \sum_{i=1}^k&(\Gamma_{k+1}-\Gamma_1)^\frac{1}{2}\leq \Big\{8(2 + n)\Gamma_1\Big\}^{\frac{1}{2}}.
\end{align}
\end{cor}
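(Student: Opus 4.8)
The plan is to deduce the Corollary from Corollary~\ref{cor-T-divergence-free} by specializing the data of Problem~\eqref{problem1} to the drifted Laplacian on the Gaussian shrinking soliton. Concretely, I would take $T=I$ (the identity $(1,1)$-tensor) and $\eta=\frac{1}{4}|x|^2$; then $\mathscr{L}=\Delta_\eta$, the tensor $T$ is trivially divergence-free, and one has $\varepsilon=\delta=1$ together with $T_0=\sup_\Omega|\tr(\nabla T)|=0$. Viewing $\mathbb{R}^n$ as isometrically immersed in itself (equivalently, as a totally geodesic affine subspace of $\mathbb{R}^m$), the second fundamental form vanishes, so the generalized mean curvature vector $\mathbf{H}_T$ --- which for $T=I$ coincides with the ordinary mean curvature vector $\mathbf{H}$ --- is identically zero, whence $H_0=0$. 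With these choices the Dirichlet problem for the bi-drifted Cheng-Yau operator $\mathscr{L}^2$ is precisely Problem~\eqref{problem3}, so the inequalities \eqref{Equation-1.4} and \eqref{Equation-1.14} are available.

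Next I would substitute the bound for the constant $C_0$ from the Gaussian example above: there $C_0=\sup_\Omega\big\{\frac{1}{2}\Delta\eta-\frac{1}{4}|\nabla\eta|^2\big\}\le\frac{m}{4}-\frac{1}{16}\min_{\bar{\Omega}}\{|x|^2\}$, hence $4C_0\le m-\frac{1}{4}\min_{\bar{\Omega}}\{|x|^2\}$. Plugging $H_0=0$ and $\varepsilon=\delta=1$ into \eqref{Equation-1.4} and \eqref{Equation-1.14}, and then enlarging the resulting right-hand sides by replacing $4C_0$ with the upper bound $m-\frac{1}{4}\min_{\bar{\Omega}}\{|x|^2\}$ --- which is legitimate since each right-hand side is non-decreasing in the constant $n^2H_0^2+4C_0$, all the factors $\Gamma_{k+1}-\Gamma_i$ being nonnegative --- gives at once the first two displayed inequalities of the Corollary.

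For the final assertions I would restrict to $\Omega=\{x\in\mathbb{R}^n:\ 4m<|x|^2<r_0^2\}$, whose closure is the closed annulus $\{4m\le|x|^2\le r_0^2\}$, so that $\min_{\bar{\Omega}}\{|x|^2\}=4m$ and therefore $m-\frac{1}{4}\min_{\bar{\Omega}}\{|x|^2\}=0$ (equivalently $C_0\le 0$ on this region). Dropping this nonpositive term from the two right-hand sides --- again by the monotonicity just used --- turns the first inequality into \eqref{Equation-(1.17)}. For \eqref{Equation-(1.18)}, inserting the vanishing constant into the second inequality collapses the right-hand side to $\big\{4\Gamma_1^{\frac{1}{2}}\cdot(4+2n)\Gamma_1^{\frac{1}{2}}\big\}^{\frac{1}{2}}=\{4(4+2n)\Gamma_1\}^{\frac{1}{2}}=\{8(2+n)\Gamma_1\}^{\frac{1}{2}}$, which is exactly \eqref{Equation-(1.18)}.

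I do not anticipate a substantive obstacle: the argument is a specialization of already-proved inequalities. The points deserving attention are (i) confirming $H_0=0$ and that $\mathbf{H}_T$ reduces to $\mathbf{H}$ when $T=I$; (ii) the elementary evaluation of $C_0$ for $\eta=\frac{1}{4}|x|^2$ (the supremum of $\frac{n}{4}-\frac{1}{16}|x|^2$ over $\Omega$ being attained where $|x|^2$ is smallest); and (iii) checking that on the annular region the brackets $4\Gamma_i^{\frac{1}{2}}+4C_0$ and $(4+2n)\Gamma_i^{\frac{1}{2}}+4C_0$ stay nonnegative, so that replacing $4C_0$ by $0$ genuinely only enlarges the right-hand sides.
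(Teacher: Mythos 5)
Your proposal is correct and is essentially the paper's own derivation: the paper obtains this corollary precisely by specializing Corollary~\ref{cor-T-divergence-free} to $T=I$, $\eta=\frac{1}{4}|x|^2$, $\varepsilon=\delta=1$, $H_0=0$, and then inserting the bound $4C_0\le m-\frac{1}{4}\min_{\bar{\Omega}}\{|x|^2\}$ from the preceding example, which vanishes on the annulus $4m<|x|^2<r_0^2$. Your attention to the monotonicity of the right-hand sides in the constant $n^2H_0^2+4C_0$ is a point the paper leaves implicit.
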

\begin{remark}
The first part of our corollary is \cite[Theorem~1.2]{Du-etal} obtained by Du et al. for Problem~\eqref{problem3}, on the Gaussian shrinking soliton. 
\end{remark}
\begin{remark}
Is worth mentioning that the inequalities \eqref{Equation-(1.17)} and \eqref{Equation-(1.18)} are inequalities for Problem~\ref{problem3} which do not depend on the constant $C_0$ and it has the same behavior as estimates obtained for Problem~\ref{problem2} for complete minimal submanifold in a Euclidean space, for seeing this the reader can check Inequality~\eqref{WangXia-estimate}(with $H_0=0$) and \cite[Corollary~2]{Cheng-etal1}, respectively.
\end{remark}

To end this section, we present eigenvalue estimates for Problem~\eqref{problem1} on Riemannian manifolds that admit some special function. This type of result was initially obtained in the work by do Carmo et al.~\cite{doCarmoWangXia}.
\begin{theorem}\label{theorem3}
Let $M$ be an $n$-dimensional complete Riemannian manifold and let $\Omega$ be a bounded domain with smooth boundary in $M$.  We denote by $\Gamma_i$ the $i$-th eigenvalue of Problem~\eqref{problem1} and assume that $\mathscr{L}f \leq \delta \Delta_\eta f$, for all $f\in C^2(\Omega)$.
\begin{enumerate}
    \item[i)] If there exists a function $\varphi: \Omega \to \mathbb{R}$ such that
\begin{equation}\label{specialfunction-1}
    |\nabla \varphi| = 1, \quad |\Delta\varphi| \leq A_0, \quad \mbox{on} \quad \Omega,
\end{equation}
then, we have
\begin{align}\label{thm3-i}
    \sum_{i=1}^k(\Gamma_{k+1}-\Gamma_i)^2 \leq& \frac{\delta}{\varepsilon}\Bigg\{\sum_{i=1}^k(\Gamma_{k+1}-\Gamma_i)^2\Big(6\Gamma_i^{\frac{1}{2}}+ 4\delta^{\frac{1}{2}}(A_0 + \eta_0)\Gamma_i^\frac{1}{4} + \delta(A_0+\eta_0)^2\Big)\Bigg\}^\frac{1}{2}\nonumber\\
   & \times \Bigg\{\sum_{i=1}^k(\Gamma_{k+1} - \Gamma_i)\Big(4\Gamma_i^{\frac{1}{2}} + 4\delta^{\frac{1}{2}}(A_0 + \eta_0)\Gamma_i^{\frac{1}{4}} + \delta(A_0 + \eta_0)^2 \Big) \Bigg\}^\frac{1}{2};
\end{align}
\item[ii)] If $M$ admits an eigenmap $f=(f_1, f_2, \ldots, f_{m+1}): \Omega \to \mathbb{S}^m$ corresponding to an eigenvalue $\gamma$, that is,
\begin{equation}\label{specialfunction-2}
    \sum_{\alpha=1}^{m+1}f_\alpha^2 = 1, \quad \Delta f_\alpha = - \gamma f_\alpha, \quad \alpha = 1, \ldots, m+1,
\end{equation}
then, we have
\begin{align}\label{thm3-ii}
    \sum_{i=1}^k(\Gamma_{k+1}-\Gamma_i)^2 \leq& \frac{\delta}{\varepsilon}\Bigg\{\sum_{i=1}^k(\Gamma_{k+1}-\Gamma_i)^2\Big(6\Gamma_i^{\frac{1}{2}} + 4\delta^{\frac{1}{2}}\eta_0\Gamma_i^{\frac{1}{4}} + \delta^2(\gamma + \eta_0)\Big)\Bigg\}^\frac{1}{2}\nonumber\\
   & \times \Bigg\{\sum_{i=1}^k(\Gamma_{k+1} - \Gamma_i)\Big(4\Gamma_i^{\frac{1}{2}} + 4\delta^{\frac{1}{2}}\eta_0\Gamma_i^{\frac{1}{4}} + \delta^2(\gamma + \eta_0)\Big) \Bigg\}^\frac{1}{2};
\end{align}
\end{enumerate}
In the above, $A_0$ is constant and $\mathbb{S}^m$ is the unit $m$-sphere.
\end{theorem}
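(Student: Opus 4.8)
\emph{Proof strategy.}
I would prove both parts by the Rayleigh--Ritz/Yang commutator method applied to the fourth-order operator $\mathscr{L}^2$. Fix an $L^2(dm)$-orthonormal family of eigenfunctions $u_1,\dots,u_k,\dots$ with $\mathscr{L}^2u_i=\Gamma_i u_i$ and $u_i=\frac{\partial u_i}{\partial\nu_T}=0$ on $\partial\Omega$. In case (i) put $\phi_i:=\varphi u_i-\sum_{j=1}^k a_{ij}u_j$ with $a_{ij}=\langle\langle\varphi u_i,u_j\rangle\rangle$; in case (ii) replace $\varphi$ by each component $f_\alpha$ and set $\phi_{i\alpha}:=f_\alpha u_i-\sum_{j=1}^k a_{ij}^\alpha u_j$, $a_{ij}^\alpha=\langle\langle f_\alpha u_i,u_j\rangle\rangle$. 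A short preliminary observation is that these trial functions lie in the correct form domain: $u_i=0$ and $\frac{\partial u_i}{\partial\nu_T}=0$ on $\partial\Omega$ together with positive-definiteness of $T$ force $\nabla u_i\equiv 0$ on $\partial\Omega$, whence $\phi_i=\frac{\partial\phi_i}{\partial\nu_T}=0$ there, so $\langle\langle\mathscr{L}^2\phi_i,u_j\rangle\rangle=\Gamma_j\langle\langle\phi_i,u_j\rangle\rangle=0$ for $j\le k$, and the variational characterization gives $\Gamma_{k+1}\|\phi_i\|^2\le\langle\langle\mathscr{L}^2\phi_i,\phi_i\rangle\rangle=\|\mathscr{L}\phi_i\|^2$.

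The heart of the argument is an exact identity for $\|\mathscr{L}\phi_i\|^2-\Gamma_i\|\phi_i\|^2$. Writing $g$ for $\varphi$ (resp.\ $f_\alpha$), the product rule yields $\mathscr{L}(gu_i)=g\,\mathscr{L}u_i+u_i\,\mathscr{L}g+2\langle T(\nabla g),\nabla u_i\rangle$. Using the self-adjointness of $\mathscr{L}$ on functions vanishing on $\partial\Omega$, the relations $\langle\langle\mathscr{L}u_i,\mathscr{L}u_j\rangle\rangle=\Gamma_i\delta_{ij}$, $\mathscr{L}^2u_i=\Gamma_i u_i$, $\langle\langle\mathscr{L}(gu_i),\mathscr{L}u_j\rangle\rangle=\Gamma_j a_{ij}$, and the product formula $\mathscr{L}(g^2)=2g\,\mathscr{L}g+2\langle T(\nabla g),\nabla g\rangle$, I would derive
$$\|\mathscr{L}\phi_i\|^2-\Gamma_i\|\phi_i\|^2=\|h_i\|^2-2\int_\Omega u_i(\mathscr{L}u_i)\langle T(\nabla g),\nabla g\rangle\,dm-\sum_{j=1}^k(\Gamma_j-\Gamma_i)a_{ij}^2,$$
where $h_i:=u_i\,\mathscr{L}g+2\langle T(\nabla g),\nabla u_i\rangle$. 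This reduces matters to estimating $\|h_i\|^2$ and the sign-indefinite middle term, and to disposing of the commutator sum.

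Next I would estimate these pieces. From $\varepsilon I\le T\le\delta I$ one has $\langle T(\nabla g),\nabla g\rangle\le\delta|\nabla g|^2$ and $|\langle T(\nabla g),\nabla u_i\rangle|\le\delta|\nabla g|\,|\nabla u_i|$; the hypothesis $\mathscr{L}f\le\delta\Delta_\eta f$ converts the $\mathscr{L}g$-terms into drifted-Laplacian data, so that in (i), $|\nabla\varphi|=1$ and $|\Delta\varphi|\le A_0$ control $|\mathscr{L}\varphi|$ by $\delta(A_0+\eta_0)$, while in (ii), differentiating $\sum_\alpha f_\alpha^2=1$ twice gives $\sum_\alpha f_\alpha\nabla f_\alpha=0$ and $\sum_\alpha|\nabla f_\alpha|^2=\gamma$, and $\sum_\alpha f_\alpha\,\mathscr{L}f_\alpha\le-\delta\gamma$. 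Combining these with $\|u_i\|=1$, $\|\mathscr{L}u_i\|^2=\Gamma_i$, and $\int_\Omega|\nabla u_i|^2\,dm=-\langle\langle u_i,\Delta_\eta u_i\rangle\rangle\le\|\Delta_\eta u_i\|$ (which is controlled by $\Gamma_i^{1/2}$ up to the ellipticity constants via the comparison hypothesis), then expanding $\|h_i\|^2=\int u_i^2(\mathscr{L}g)^2+4\int u_i(\mathscr{L}g)\langle T(\nabla g),\nabla u_i\rangle+4\int\langle T(\nabla g),\nabla u_i\rangle^2$ and applying Cauchy--Schwarz term by term, one obtains a bound of the shape $c_1\Gamma_i^{1/2}+c_2\delta^{1/2}(A_0+\eta_0)\Gamma_i^{1/4}+c_3\delta(A_0+\eta_0)^2$ for $\|h_i\|^2$ and $\big|2\int_\Omega u_i(\mathscr{L}u_i)\langle T(\nabla g),\nabla g\rangle\,dm\big|\le 2\delta\Gamma_i^{1/2}$ (with $A_0+\eta_0$ replaced by $\eta_0$ and an additional $\delta^2(\gamma+\eta_0)$ contribution in (ii)). Feeding all this into $(\Gamma_{k+1}-\Gamma_i)\|\phi_i\|^2\le\|\mathscr{L}\phi_i\|^2-\Gamma_i\|\phi_i\|^2$, multiplying through by a further $(\Gamma_{k+1}-\Gamma_i)$, summing over $i$ (and over $\alpha$ in (ii)), and using the symmetry $a_{ij}=a_{ji}$ to discard the double sum $\sum_{i,j}(\Gamma_{k+1}-\Gamma_i)^\bullet(\Gamma_j-\Gamma_i)a_{ij}^2$ in Yang's fashion, I would arrive at an inequality of the form $\sum_i(\Gamma_{k+1}-\Gamma_i)^2(\cdots)\le\sum_i(\Gamma_{k+1}-\Gamma_i)(\cdots)$; a final Cauchy--Schwarz splitting the right-hand side into the product of a $(\Gamma_{k+1}-\Gamma_i)^2$-weighted sum and a $(\Gamma_{k+1}-\Gamma_i)$-weighted sum, with the two-sided bound on $T$ producing the factor $\delta/\varepsilon$, yields \eqref{thm3-i} (resp.\ \eqref{thm3-ii}).

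\emph{Main obstacle.}
The genuinely delicate part is the bookkeeping in the third step: tracking the three ``orders'' $\Gamma_i^{1/2}$, $(A_0+\eta_0)\Gamma_i^{1/4}$, $(A_0+\eta_0)^2$ through the repeated Cauchy--Schwarz estimates so that the constants match those in \eqref{thm3-i}, correctly handling the sign-indefinite term $-2\int u_i(\mathscr{L}u_i)\langle T(\nabla g),\nabla g\rangle$ and the commutator sum $\sum_j(\Gamma_j-\Gamma_i)a_{ij}^2$, and---in case (ii)---weaving the summation over the eigenmap components $\alpha$ through every estimate by means of the constraint identities $\sum_\alpha f_\alpha^2=1$, $\sum_\alpha f_\alpha\nabla f_\alpha=0$, $\sum_\alpha|\nabla f_\alpha|^2=\gamma$. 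Placing the ellipticity constants $\varepsilon,\delta$ so that the specialization $T=I$, $\eta=\mathrm{const}$ recovers the known estimates is where most of the care is needed.
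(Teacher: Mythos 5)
Your proposal follows essentially the same route as the paper: the paper packages your trial-function construction $\phi_i=gu_i-\sum_j a_{ij}u_j$, the exact identity for $\|\mathscr{L}\phi_i\|^2-\Gamma_i\|\phi_i\|^2$ (whose right-hand side is precisely the paper's $w_i-\sum_j(\Gamma_j-\Gamma_i)a_{ij}^2$), and the Yang-type cancellation of the double sum into Lemma~\ref{lemma1}(i) with a free parameter $B$, and then derives Theorem~\ref{theorem3} by the same term-by-term Cauchy--Schwarz estimates you describe, your final Cauchy--Schwarz splitting being equivalent to the paper's optimization over $B$. The approach and the resulting bookkeeping of the orders $\Gamma_i^{1/2}$, $\Gamma_i^{1/4}$, and the constant term match the paper's proof.
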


Examples of special functions satisfying the conditions in Theorem~\ref{theorem3} can be found in \cite[Exemples~4.1-4.4]{doCarmoWangXia} or \cite[Exemples~2.1-2.4]{WangXia}.

\section{Preliminaries}\label{preliminaries}
This section is short and serves to establish some basic notations and describe what is meant by properties of a $(1,1)$-tensor in a bounded domain $\Omega\subset M^n$ with smooth boundary $\partial\Omega$.

Let us identify, through this paper, a $(0,2)$-tensor $T:\mathfrak{X}(\Omega)\times\mathfrak{X}(\Omega)\to C^{\infty}(\Omega)$ with its associated $(1,1)$-tensor $T:\mathfrak{X}(\Omega)\to\mathfrak{X}(\Omega)$ by the equation 
\begin{equation*}
    \langle T(X), Y \rangle = T(X, Y).
\end{equation*}
In particular, let us identify the metric tensor $\langle , \rangle$ with the identity $I$ in $\mathfrak{X}(\Omega)$. We can see that $\varepsilon I \leq T \leq \delta I$ on $\Omega$ implies 
\begin{align*}
    \varepsilon \langle T(X), X\rangle \leq |T(X)|^2 \leq \delta \langle T(X), X\rangle, \quad \mbox{for all} \quad X\in \mathfrak{X}(\Omega).
\end{align*} And from there, we get
\begin{align}\label{Equation-2.1}
    \varepsilon^2 |\nabla \eta|^2 \leq |T(\nabla \eta)|^2 \leq \delta^2 |\nabla \eta|^2.
\end{align}
We will use the following notations
\begin{align*}
    \tr(\nabla T):=\sum_{i=1}^n(\nabla T)(e_i, e_i), \quad |T|=\Big(\sum_{i=1}^n|T(e_i)|^2\Big)^{\frac{1}{2}},
\end{align*}
and $\|\cdot\|_{L^2}$ for the canonical norm of a real-valued function in $L^2(\Omega,dm)$.

Let $\alpha$ be the second fundamental form on an $n$-dimensional complete Riemannian manifold $(M^n, \langle , \rangle)$  isometrically immersed in $\mathbb{R}^m$, so ${\bf H}=\frac{1}{n}\tr (\alpha)$ is the mean curvature vector. The generalized mean curvature vector, associate with a symmetric $(1, 1)$-tensor $T$, is the normal vector field defined by
\begin{equation*}
    {\bf H}_T=\frac{1}{n}\sum_{i,j=1}^nT(e_i, e_j)\alpha(e_i, e_j)=\frac{1}{n}\sum_{i=1}^n\alpha(T(e_i), e_i):=\frac{1}{n}\tr{(\alpha\circ T)},
\end{equation*}
where $\{e_1, e_2, \ldots, e_n\}$ is a local orthonormal frame of $TM$. Notice that, when $T=I$ we have ${\bf H}_T={\bf H}$. 

From the definition of $\eta$-divergence of $X$ (see Eq.~\eqref{eq1.1}) and the usual properties of divergence of vector fields, one has
\begin{equation*}
    \dv_\eta(fX)=f\dv_\eta X + \langle \nabla f, X \rangle,
\end{equation*}
for all $f \in C^\infty(\Omega)$. Notice that the $(\eta,T)$-divergence form of $\mathscr{L}$ on $\Omega$ allows us to check that the divergence theorem remains true in the form
\begin{equation*}
   \int_\Omega\dv_\eta X dm = \int_{\partial\Omega} \langle X, \nu\rangle d\mu.
\end{equation*}
In particular, for $X=T(\nabla f)$, since $T(\nabla f, \nu) = \langle T(\nabla f), \nu \rangle = \frac{\partial f}{\partial \nu_T}$ we obtain
\begin{equation*}
   \int_\Omega\mathscr{L}{f}dm = \int_{\partial\Omega}T(\nabla f, \nu) d\mu = \int_{\partial \Omega} \frac{\partial f}{\partial \nu_T}d\mu, 
\end{equation*}
where $dm = e^{-\eta}d\Omega$ and $d\mu = e^{-\eta}d\partial\Omega$ are the weight volume form on $\Omega$ and the volume form on the boundary $\partial\Omega$ induced by the outward unit normal vector $\nu$ on $\partial \Omega$, respectively. Thus, the integration by parts formula is given by
\begin{equation}\label{parts}
     \int_{\Omega}\ell\mathscr{L}{f}dm =-\int_\Omega T(\nabla\ell, \nabla f)dm + \int_{\partial\Omega}\ell \frac{\partial f}{\partial \nu_T} d\mu,
\end{equation}
for all $\ell, f \in C^\infty(\Omega)$. 

Therefore, we can see that $\mathscr{L}^2$ is a formally self-adjoint operator in the space of all smooth real-valued functions $u, v$ such that $u|_{\partial \Omega}= v|_{\partial \Omega}=\frac{\partial u}{\partial \nu_T}|_{\partial \Omega}= \frac{\partial v}{\partial \nu_T}|_{\partial \Omega}=0$, with respect to the inner product
\begin{align*}
    \langle\langle u , v \rangle \rangle = \int_\Omega uv dm,
\end{align*}
In fact, from \eqref{parts} and since $u|_{\partial \Omega}= v|_{\partial \Omega}=0$, we have
\begin{align}\label{Equation-2.2}
    \int_{\Omega} u \mathscr{L}{v}dm =-\int_\Omega T(\nabla u, \nabla v)dm = \int_{\Omega} v \mathscr{L}{u}dm.
\end{align}
Now, since $\frac{\partial u}{\partial \nu_T}|_{\partial \Omega}= \frac{\partial v}{\partial \nu_T}|_{\partial \Omega}=0$, again from \eqref{parts} we get
\begin{align*}
    0=\int_{\partial \Omega} \mathscr{L}u \frac{\partial v}{\partial \nu_T}d\mu = \int_\Omega \mathscr{L}u \mathscr{L}v dm + \int_\Omega T(\nabla \mathscr{L}u, \nabla v)dm,
\end{align*}
replacing $u$ by $v$ in the previous equality we obtain
\begin{align}\label{Equation-2.3}
    - \int_\Omega T(\nabla \mathscr{L}u, \nabla v)dm = \int_\Omega \mathscr{L}u \mathscr{L}v dm = - \int_\Omega T(\nabla u, \nabla \mathscr{L}v)dm.
\end{align}
Therefore, from \eqref{Equation-2.2} and \eqref{Equation-2.3} we have
\begin{align*}
    \langle\langle u, \mathscr{L}^2v\rangle\rangle &= \int_\Omega u \mathscr{L}^2v dm = \int_\Omega u \mathscr{L}(\mathscr{L} v)dm = - \int_\Omega T(\nabla u, \nabla \mathscr{L} v)dm \\
    &= - \int_\Omega T(\nabla \mathscr{L} u, \nabla  v)dm = \int_\Omega v \mathscr{L}(\mathscr{L} u)dm = \int_\Omega  v\mathscr{L}^2u dm \\
    & = \langle\langle \mathscr{L}^2u, v\rangle\rangle. 
\end{align*}
 Thus the eigenvalue problem~\eqref{problem1} has a real and discrete spectrum 
\begin{equation*}
    0 < \Gamma_1 \leq \Gamma_2 \leq \cdots \leq \Gamma_k \leq \cdots\to\infty,
\end{equation*}
where each $\Gamma_i$ is repeated according to its multiplicity. In particular, for eigenfunction $u_i$ corresponding to the eigenvalue $\Gamma_i$, from Problem~\eqref{problem1} and integration by parts, we have
\begin{equation}\label{lambda_i}
    \Gamma_i\int_\Omega u_i^2dm = \int_\Omega u_i\mathscr{L}^2u_idm =  \int_\Omega (\mathscr{L}u_i)^2dm.
\end{equation}

It is worth mentioning the work by Gomes and Miranda~\cite[Section~2]{GomesMiranda} where they gave geometric motivations to work with the operator $\mathscr{L}$ in the $(\eta,T)$-divergence form in bounded domains in Riemannian manifolds, in addition to calculating a Bochner-type formula for it. 

\section{Two keystone technical lemmas} 

In order to prove our first theorem we will need of the two keystone technical lemmas. 
\begin{lem}\label{lemma1}
Let $\Omega$ be a bounded domain in an $n$-dimensional complete Riemannian manifold $M$. Let $\Gamma_i$ be the i-th eigenvalue of Problem~\eqref{problem1} and let $u_i$ be a normalized real-valued eigenfunction corresponding to $\Gamma_i$, that is,
\begin{align*}
    \mathscr{L}^2u_i = \Gamma_iu_i \quad \mbox{in} \quad \Omega, \quad u_i|_{\partial \Omega}=\frac{\partial u_i}{\partial \nu_T}|_{\partial \Omega} = 0, \quad \int_\Omega u_iu_jdm = \delta_{ij}, \quad \forall i,j= 1, 2, \ldots,
\end{align*}
where $\nu$ is an outward normal vector field of $\partial \Omega$ and $\frac{\partial u}{\partial \nu_T}=\langle T(\nabla u), \nu \rangle$.
Then,
\begin{enumerate}
    \item[i)] for any smooth function $f: \Omega \to \mathbb{R}$, we have \begin{align}\label{lemma-i)-estimate}
    \sum_{i=1}^k&(\Gamma_{k+1}-\Gamma_i)^2 \int_{\Omega}T(\nabla f, \nabla f)u_i^2dm\leq B\sum_{i=1}^k(\Gamma_{k+1}-\Gamma_i)^2  \nonumber\\
    &\times\int_\Omega \Big[u_i^2(\mathscr{L}f)^2 + 4\Big((T(\nabla f, \nabla u_i))^2 + u_i\mathscr{L}f T(\nabla f, \nabla u_i)\Big)-2u_i\mathscr{L}u_iT(\nabla f, \nabla f)\Big]dm\nonumber\\
    &+\frac{1}{B}\sum_{i=1}^k(\Gamma_{k+1}-\Gamma_i)\Big\|T(\nabla f, \nabla u_i) + \frac{u_i}{2}\mathscr{L}f \Big\|^2_{L^2};
\end{align}
   \item[ii)] if $h_i: \Omega \to \mathbb{R}$ is a smooth function that satisfies $\int_\Omega h_i u_1 u_{j+1} dm =0$ for $1 \leq j < i$, for any positive integer $i\geq 2$, we have
   \begin{align}\label{lemma-ii)-estimate}
       (\Gamma&_{i+1}-\Gamma_1)^{\frac{1}{2}}\int_\Omega u_1^2 T(\nabla h_i, \nabla h_i) dm  \nonumber\\ &\leq \Big(\frac{B}{2} + \frac{1}{2B}\Big)\int_\Omega(u_1 \mathscr{L}h_i + 2T(\nabla h_i, \nabla u_1))^2dm - B \int_\Omega u_1 \mathscr{L}u_1 T(\nabla h_i, \nabla h_i)dm,
   \end{align}
\end{enumerate}
where $B$ is any positive constant.
\end{lem}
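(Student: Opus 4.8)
The plan is to adapt the classical Payne–Pólya–Weinberger/Cheng–Yang "commutator trick" to the operator $\mathscr{L}^2$, being careful to track the extra terms coming from $T$ and $\eta$. For part (i), I would start from the trial functions
\[
\phi_i := f u_i - \sum_{j=1}^k a_{ij} u_j, \qquad a_{ij} := \int_\Omega f u_i u_j\, dm,
\]
which by construction are orthogonal to $u_1,\dots,u_k$ and vanish on $\partial\Omega$ together with $\partial\phi_i/\partial\nu_T$ (here one uses $u_i|_{\partial\Omega}=\partial u_i/\partial\nu_T|_{\partial\Omega}=0$). The Rayleigh quotient bound $\langle\langle \mathscr{L}^2\phi_i,\phi_i\rangle\rangle \ge \Gamma_{k+1}\|\phi_i\|_{L^2}^2$ then gives, after expanding and using \eqref{Equation-2.2}, \eqref{Equation-2.3} and \eqref{lambda_i},
\[
(\Gamma_{k+1}-\Gamma_i)\|\phi_i\|_{L^2}^2 \le -\int_\Omega \phi_i\,\big[\mathscr{L}^2(fu_i) - f\mathscr{L}^2 u_i\big]\,dm .
\]
The next step is to compute the commutator $\mathscr{L}^2(fu_i)-f\mathscr{L}^2u_i$ using the product rule for $\mathscr{L}$, namely $\mathscr{L}(fu)=f\mathscr{L}u + u\mathscr{L}f + 2T(\nabla f,\nabla u)$ (which follows from $\mathscr{L}f=\dv(T\nabla f)-\langle\nabla\eta,T\nabla f\rangle$ and the Leibniz rule for $\dv_\eta$). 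Iterating this identity and integrating against $\phi_i$, the terms regroup so that the right-hand side is controlled by the quantity
\[
\int_\Omega\Big[u_i^2(\mathscr{L}f)^2 + 4\big((T(\nabla f,\nabla u_i))^2 + u_i\mathscr{L}f\,T(\nabla f,\nabla u_i)\big) - 2u_i\mathscr{L}u_i\,T(\nabla f,\nabla f)\Big]dm,
\]
paired with $\int_\Omega 2\phi_i\,T(\nabla f,\nabla u_i)\,dm$ type cross terms; the latter are estimated against $\|T(\nabla f,\nabla u_i)+\tfrac{u_i}{2}\mathscr{L}f\|_{L^2}^2$ via Cauchy–Schwarz together with $\|\phi_i\|_{L^2}\le\|fu_i\|_{L^2}$. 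Multiplying through by $(\Gamma_{k+1}-\Gamma_i)$, applying Cauchy–Schwarz with a free parameter $B>0$ to split the product $(\Gamma_{k+1}-\Gamma_i)\cdot(\Gamma_{k+1}-\Gamma_i)$ as $(\Gamma_{k+1}-\Gamma_i)^2$ plus a lower-order piece, and summing over $i=1,\dots,k$, yields \eqref{lemma-i)-estimate}. The key algebraic input is that $\int_\Omega T(\nabla f,\nabla f)u_i^2\,dm$ appears on the left after reorganizing $\int_\Omega \phi_i\,(\text{commutator})\,dm = -\int_\Omega T(\nabla f,\nabla f)u_i^2\,dm + (\text{terms involving }a_{ij})$, and the $a_{ij}$-terms are discarded using the antisymmetry/symmetry relations among $a_{ij}$, $b_{ij}:=\int_\Omega (\mathscr{L}f)u_iu_j\,dm + 2\int_\Omega T(\nabla f,\nabla u_i)u_j\,dm$, exactly as in the Euclidean clamped-plate argument.

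For part (ii), the approach is the same but with $k$ specialized so that the relevant index is $1$: one uses the single eigenfunction $u_1$ and trial functions $\psi_i := h_i u_1 - \sum_{j=1}^{i-1} b_{ij} u_{j+1}$, where $b_{ij}:=\int_\Omega h_i u_1 u_{j+1}\,dm$. The hypothesis $\int_\Omega h_i u_1 u_{j+1}\,dm = 0$ for $1\le j<i$ forces $b_{ij}=0$, so $\psi_i = h_i u_1$ is automatically orthogonal to $u_1,\dots,u_i$, and then $\langle\langle\mathscr{L}^2\psi_i,\psi_i\rangle\rangle\ge\Gamma_{i+1}\|\psi_i\|_{L^2}^2$. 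Expanding as before with $f$ replaced by $h_i$ and $u_i$ by $u_1$, and this time using $\|\psi_i\|_{L^2}^2 = \int_\Omega h_i^2 u_1^2\,dm$ directly (no subtraction), one arrives at
\[
(\Gamma_{i+1}-\Gamma_1)\int_\Omega h_i^2 u_1^2\,dm \le -\int_\Omega \psi_i\big(\mathscr{L}^2(h_iu_1)-h_i\mathscr{L}^2u_1\big)dm,
\]
and a Cauchy–Schwarz split of the resulting cross term with parameter $B$ — bounding $(\Gamma_{i+1}-\Gamma_1)^{1/2}\int u_1^2 T(\nabla h_i,\nabla h_i)\,dm$ against $(\tfrac{B}{2}+\tfrac{1}{2B})\int(u_1\mathscr{L}h_i+2T(\nabla h_i,\nabla u_1))^2\,dm$ plus the $-B\int u_1\mathscr{L}u_1\,T(\nabla h_i,\nabla h_i)\,dm$ term — gives \eqref{lemma-ii)-estimate}.

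The main obstacle is the careful bookkeeping in expanding the double commutator $\mathscr{L}^2(fu_i) - f\mathscr{L}^2 u_i$: unlike the Laplacian, $\mathscr{L}$ does not commute with multiplication in a clean way, and one must verify that all the boundary terms produced by integration by parts \eqref{parts} vanish under the Dirichlet conditions $u_i|_{\partial\Omega}=\partial u_i/\partial\nu_T|_{\partial\Omega}=0$ (and likewise for $\phi_i$, which requires checking $\partial\phi_i/\partial\nu_T|_{\partial\Omega}=0$, using that $f$ is smooth and $u_i$ has vanishing Dirichlet data). The quantities $T(\nabla f,\nabla u_i)$ and $\mathscr{L}f$ must be grouped precisely as in the statement so that the Cauchy–Schwarz step produces exactly $\|T(\nabla f,\nabla u_i)+\tfrac{u_i}{2}\mathscr{L}f\|_{L^2}^2$ and not some other combination; this is the delicate point and is where the tensor $T$ enters non-trivially compared to the $T=I$ case.
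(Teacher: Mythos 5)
Your part (i) follows essentially the same route as the paper: the trial functions $\phi_i=fu_i-\sum_j a_{ij}u_j$, the Rayleigh--Ritz bound, the product rule $\mathscr{L}(fu)=f\mathscr{L}u+u\mathscr{L}f+2T(\nabla f,\nabla u)$, the identity $r_{ij}=(\Gamma_j-\Gamma_i)a_{ij}$, the antisymmetric quantities $b_{ij}$, and the Cauchy--Schwarz split with the free parameter $B$ are all exactly the paper's mechanism (the paper keeps the $a_{ij}$-terms and drops them only after completing a square in $(\Gamma_i-\Gamma_j)a_{ij}-\tfrac{1}{B}b_{ij}$, but that is the standard bookkeeping you are alluding to). As a plan for (i), this is sound.

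Part (ii), however, contains a genuine gap. The hypothesis $\int_\Omega h_iu_1u_{j+1}\,dm=0$ for $1\le j<i$ gives orthogonality of $h_iu_1$ to $u_2,\dots,u_i$ only; it says nothing about $\int_\Omega h_iu_1\cdot u_1\,dm=\int_\Omega h_iu_1^2\,dm$, which is not zero in general (and is not zero for the Gram--Schmidt functions $h_i$ used later in the proof of Theorem~\ref{theorem1.1}). Your trial function $\psi_i=h_iu_1$ is therefore \emph{not} orthogonal to $u_1$, and the Rayleigh--Ritz inequality $\Gamma_{i+1}\|\psi_i\|_{L^2}^2\le\int_\Omega\psi_i\mathscr{L}^2\psi_i\,dm$ cannot be invoked: without orthogonality to $u_1$ the min--max principle only yields the useless bound with $\Gamma_1$ in place of $\Gamma_{i+1}$. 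The paper repairs this by setting $\psi_i=(h_i-a_i)u_1$ with $a_i=\int_\Omega h_iu_1^2\,dm$. This correction costs nothing downstream: since $a_i$ is a constant, $\nabla(h_i-a_i)=\nabla h_i$ and $\mathscr{L}(h_i-a_i)=\mathscr{L}h_i$, so the key identity
\begin{align*}
\int_\Omega\psi_i\big(u_1\mathscr{L}h_i+2T(\nabla h_i,\nabla u_1)\big)\,dm=-\int_\Omega u_1^2\,T(\nabla h_i,\nabla h_i)\,dm
\end{align*}
still holds, and $\|\psi_i\|_{L^2}^2$ enters only through the Rayleigh--Ritz upper bound, so your claim that one can use $\|\psi_i\|_{L^2}^2=\int_\Omega h_i^2u_1^2\,dm$ ``directly, with no subtraction'' must be abandoned. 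With this correction your argument for (ii) closes in the same way as the paper's.
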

\begin{proof}
For $i=1, \ldots, k$, consider the following functions $\phi_i: \Omega \to \mathbb{R}$ given by
\begin{align*}
    \phi_i = fu_i - \sum_{j=1}^k a_{ij}u_j,
\end{align*}
where $a_{ij}=\int_\Omega fu_iu_j dm$. Notice that $\phi_i|_{\partial \Omega}=\frac{\partial \phi_i}{\partial \nu_T}|_{\partial \Omega}=0$ and
\begin{align*}
    \int_\Omega u_j \phi_i dm =0, \quad \mbox{for all} \quad i = 1, \ldots, k.
\end{align*}
Then from the Rayleigh-Ritz inequality (see, e.g., \cite[Theorem~9.43]{PJ-Olver}), we get
\begin{equation}\label{Rayleigh}
    \Gamma_{k+1} \leq - \frac{\int_\Omega \phi_i\mathscr{L}^2\phi_idm}{\int_\Omega \phi_i^2dm}, \quad \mbox{for all} \quad i = 1, \ldots, k.
\end{equation}
Since
\begin{align*}
    \mathscr{L}\phi_i = \mathscr{L}(fu_i)-\sum_{j=1}^ka_{ij}\mathscr{L}u_j = f \mathscr{L}u_i + u_i\mathscr{L}f + 2T(\nabla f, \nabla u_i) - \sum_{j=1}^ka_{ij}\mathscr{L}u_j,
\end{align*}
we have
\begin{align*}
    \mathscr{L}^2\phi_i = \mathscr{L}f\mathscr{L}u_i + \Gamma_i fu_i +2T(\nabla f, \nabla \mathscr{L}u_i)+\mathscr{L}(u_i\mathscr{L}f)+2\mathscr{L}T(\nabla f, \nabla u_i) - \sum_{j=1}^k a_{ij}\Gamma_ju_j,
\end{align*}
hence, we obtain
\begin{align}\label{Equation-3.4}
    \int_\Omega \phi_i\mathscr{L}^2\phi_idm = \Gamma_i\|\phi_i\|_{L^2}^2 + \int_\Omega fu_ip_idm - \sum_{j=1}^k a_{ij}r_{ij},
\end{align}
where $r_{ij}=\int_\Omega p_iu_j dm$ and 
\begin{align*}
    p_i = \mathscr{L}f\mathscr{L}u_i +2T(\nabla f, \nabla \mathscr{L}u_i)+\mathscr{L}(u_i\mathscr{L}f)+2\mathscr{L}T(\nabla f, \nabla u_i). 
\end{align*}
Using the divergence theorem, it is not difficult to see that 
\begin{align}\label{Equation-3.5}
    \int_\Omega & u_j \mathscr{L}T(\nabla f, \nabla u_i)dm + \int_\Omega u_j T(\nabla f, \nabla \mathscr{L} u_i) dm \nonumber\\ 
    &= \int_\Omega \mathscr{L}u_j T(\nabla f, \nabla u_i)dm
    -\int_\Omega \mathscr{L}u_i T(\nabla f, \nabla u_j)dm - \int_\Omega u_j \mathscr{L}f\mathscr{L}u_i dm. 
\end{align}
Moreover, from \eqref{parts} and \eqref{lambda_i}
\begin{align*}
    \int_\Omega& \mathscr{L}u_j T(\nabla f, \nabla u_i)dm
    -\int_\Omega \mathscr{L}u_i T(\nabla f, \nabla u_j)dm \\
    =& -\int_\Omega fT(\nabla \mathscr{L}u_j, \nabla u_i) dm + \int_\Omega f T(\nabla  \mathscr{L}u_i, \nabla u_j) dm \\
    =&\int_\Omega (fu_i  \mathscr{L}^2u_j - fu_j  \mathscr{L}^2u_i)dm +\int_\Omega u_i T(\nabla f, \nabla  \mathscr{L}u_j) dm - \int_\Omega u_j T(\nabla f, \nabla  \mathscr{L}u_i)dm \\
    =&(\Gamma_j - \Gamma_i)a_{ij} - \int_\Omega u_i  \mathscr{L}u_j  \mathscr{L}f dm -\int_\Omega \mathscr{L}u_jT(\nabla f, \nabla u_i)dm \\
    &+ \int_\Omega u_j  \mathscr{L}u_i  \mathscr{L}f dm + \int_\Omega \mathscr{L}u_iT(\nabla f, \nabla u_j)dm,
\end{align*}
and the previous equality implies that
\begin{align}\label{Equation-3.6}
     2&\int_\Omega \mathscr{L}u_j T(\nabla f, \nabla u_i)dm
    -2\int_\Omega \mathscr{L}u_i T(\nabla f, \nabla u_j)dm = \nonumber\\
    &=(\Gamma_j - \Gamma_i)a_{ij} - \int_\Omega u_i  \mathscr{L}u_j  \mathscr{L}f dm + \int_\Omega u_j  \mathscr{L}u_i  \mathscr{L}f dm. 
\end{align}
From \eqref{Equation-3.5} and \eqref{Equation-3.6}, we obtain
\begin{align}\label{Equation-3.7}
    2\int_\Omega & u_j \mathscr{L}T(\nabla f, \nabla u_i)dm + 2\int_\Omega u_j T(\nabla f, \nabla \mathscr{L} u_i) dm \nonumber\\
    &=(\Gamma_j - \Gamma_i)a_{ij} - \int_\Omega u_i  \mathscr{L}u_j  \mathscr{L}f dm - \int_\Omega u_j  \mathscr{L}u_i  \mathscr{L}f dm. 
\end{align}
Since $\mathscr{L}$ is self-adjoint, we know that
\begin{align*}
    \int_\Omega u_j  \mathscr{L}(u_i  \mathscr{L}f)dm = \int_\Omega u_i  \mathscr{L}u_j  \mathscr{L}f dm.
\end{align*}
From \eqref{Equation-3.7} and the previous equality, we have
\begin{align*}
    \int_\Omega u_j \Big(\mathscr{L}u_i \mathscr{L}f + \mathscr{L}(u_i\mathscr{L}f) + 2 \mathscr{L}T(\nabla f, \nabla \mathscr{L}u_i) + 2T(\nabla f, \nabla \mathscr{L}u_i) \Big)dm = (\Gamma_j - \Gamma_i)a_{ij},
\end{align*}
since $r_{ij}=\int_\Omega p_iu_j dm$, indeed we obtain
\begin{align}\label{Equation-3.8}
    r_{ij} = (\Gamma_j - \Gamma_i)a_{ij}.
\end{align}
From \eqref{Rayleigh}, \eqref{Equation-3.4} and \eqref{Equation-3.8} it follows that
\begin{align}\label{Equation-3.9}
    (\Gamma_{k+1}-\Gamma_i)\|\phi_i\|_{L^2}^2 \leq \int_\Omega fu_ip_i dm - \sum_{j=1}^k a_{ij}r_{ij} = w_i + \sum_{j=1}^k (\Gamma_i - \Gamma_j)a_{ij}^2,
\end{align}
where $w_i = \int_\Omega fu_ip_i dm$. Now, setting
\begin{align*}
    b_{ij}=\int_\Omega u_j\Big(T(\nabla f, \nabla u_i)+\frac{u_i}{2}\mathscr{L}f\Big)dm,
\end{align*}
then, from integration by parts formula \eqref{parts} we have
\begin{align*}
    b_{ij}+b_{ji}&=\int_\Omega T(\nabla f, u_j\nabla u_i+u_i \nabla u_j)dm +\int_\Omega u_i u_j\mathscr{L}fdm\\
    &=\int_\Omega T(\nabla f, \nabla (u_i u_j))dm +\int_\Omega u_i u_j\mathscr{L}fdm\\
    &=-\int_\Omega u_i u_j\mathscr{L}fdm +\int_\Omega u_i u_j\mathscr{L}fdm=0.
\end{align*}
Now, one gets from the divergence theorem and definition of $\phi_i$ that
\begin{align}\label{Equation-3.10}
    \int_\Omega (-2)\phi_i\Big(T(\nabla f, \nabla u_i)+\frac{u_i}{2}\mathscr{L}f\Big)dm &= -2\int_\Omega fu_i\Big(T(\nabla f, \nabla u_i)+\frac{u_i}{2}\mathscr{L}f\Big)dm +2\sum_{j=1}^ka_{ij}b_{ij}\nonumber\\
    &=\int_\Omega u_i^2T(\nabla f, \nabla f) dm + 2\sum_{j=1}^ka_{ij}b_{ij}.
\end{align}
Multiplying \eqref{Equation-3.10} by $(\Gamma_{k+1} - \Gamma_i)^2$ and using the Schwarz inequality and \eqref{Equation-3.9}, for any positive constant $B$, we get
\begin{align*}
   (\Gamma&_{k+1} - \Gamma_i)^2\Bigg( \int_\Omega u_i^2T(\nabla f, \nabla f) dm + 2\sum_{j=1}^ka_{ij}b_{ij}\Bigg)\\
   =&(\Gamma_{k+1} - \Gamma_i)^2\int_\Omega (-2)\phi_i\Big(T(\nabla f, \nabla u_i)+\frac{u_i}{2}\mathscr{L}f-\sum_{j=1}^kb_{ij}u_j\Big)dm\\
   \leq& 2(\Gamma_{k+1} - \Gamma_i)^2\|\phi_i\|_{L^2}\Big\|T(\nabla f, \nabla u_i ) +\frac{u_i}{2}\mathscr{L}f-\sum_{j=1}^kb_{ij}u_j\Big\|_{L^2}\\
   \leq& B(\Gamma_{k+1} - \Gamma_i)^3\|\phi_i\|_{L^2}^2 + \frac{(\Gamma_{k+1} - \Gamma_i)}{B}\Big\|T(\nabla f, \nabla u_i ) +\frac{u_i}{2}\mathscr{L}f-\sum_{j=1}^kb_{ij}u_j\Big\|_{L^2}^2\\
   \leq&  B(\Gamma_{k+1} - \Gamma_i)^2(w_i + \sum_{j=1}^k (\Gamma_i - \Gamma_j)a_{ij}^2)\\
   &+ \frac{(\Gamma_{k+1} - \Gamma_i)}{B}\Bigg(\Big\|T(\nabla f, \nabla u_i ) +\frac{u_i}{2}\mathscr{L}f\Big\|_{L^2}^2 -\sum_{j=1}^kb_{ij}^2\Bigg).
\end{align*}
Summing over $i$ from $1$ to $k$ and noticing $a_{ij}=a_{ji}$ and $b_{ij}=-b_{ji}$, we conclude that
\begin{align*}
   \sum_{i=1}^k&(\Gamma_{k+1} - \Gamma_i)^2\int_\Omega u_i^2T(\nabla f, \nabla f) dm -2\sum_{i,j=1}^k(\Gamma_{k+1} - \Gamma_i)(\Gamma_i - \Gamma_j)a_{ij}b_{ij}\\
   \leq& B\sum_{i=1}^k(\Gamma_{k+1} - \Gamma_i)^2w_i -B \sum_{i,j=1}^k(\Gamma_{k+1} - \Gamma_i)(\Gamma_i - \Gamma_j)^2a_{ij}^2 - \frac{1}{B}\sum_{i,j=1}^k(\Gamma_{k+1} - \Gamma_i)b_{ij}^2\\
   &+\frac{1}{B}\sum_{i=1}^k(\Gamma_{k+1} - \Gamma_i)\Big\|T(\nabla f, \nabla u_i ) +\frac{u_i}{2}\mathscr{L}f\Big\|_{L^2}^2.
\end{align*}
Hence, since $B>0$, we obtain
\begin{align}\label{Equation-3.11}
   \sum_{i=1}^k&(\Gamma_{k+1} - \Gamma_i)^2\int_\Omega u_i^2T(\nabla f, \nabla f) dm\nonumber\\
   \leq& B\sum_{i=1}^k(\Gamma_{k+1} - \Gamma_i)^2w_i +\frac{1}{B}\sum_{i=1}^k(\Gamma_{k+1} - \Gamma_i)\Big\|T(\nabla f, \nabla u_i ) +\frac{u_i}{2}\mathscr{L}f\Big\|_{L^2}^2 \nonumber\\
   &-B\sum_{i,j=1}^k(\Gamma_{k+1} - \Gamma_i)\Big((\Gamma_i-\Gamma_j)a_{ij} - \frac{1}{B}b_{ij}\Big)^2\nonumber\\
   \leq&B\sum_{i=1}^k(\Gamma_{k+1} - \Gamma_i)^2w_i +\frac{1}{B}\sum_{i=1}^k(\Gamma_{k+1} - \Gamma_i)\Big\|T(\nabla f, \nabla u_i ) +\frac{u_i}{2}\mathscr{L}f\Big\|_{L^2}^2.
\end{align}
Notice that,
\begin{align}\label{Equation-3.12}
    \int_\Omega fu_i T(\nabla f, \nabla \mathscr{L}u_i)dm + \int_\Omega fu_i \mathscr{L}T(\nabla f, \nabla u_i)dm = -\int_\Omega u_i \mathscr{L}u_i T(\nabla f, \nabla f)dm\nonumber\\
    -\int_\Omega fu_i\mathscr{L}u_i \mathscr{L}f dm + \int_\Omega u_i\mathscr{L}fT(\nabla f, \nabla u_i)dm + 2 \int_\Omega \Big(T(\nabla f, \nabla u_i)\Big)^2dm,
\end{align}
and
\begin{align}\label{Equation-3.13}
     -\int_\Omega hu_i\mathscr{L}u_i \mathscr{L}f dm + \int_\Omega fu_i \mathscr{L}(u_i\mathscr{L}f)dm = \int_\Omega u_i^2 (\mathscr{L}f)^2dm +\int_\Omega u_i \mathscr{L}fT(\nabla f, \nabla u_i)dm,
\end{align}
then, from \eqref{Equation-3.12} and \eqref{Equation-3.13} we have
\begin{align}\label{Equation-3.14}
   w_i =& \int_\Omega fu_ip_i dm\nonumber\\
   =&\int_\Omega \Big[u_i^2(\mathscr{L}f)^2 + 4\Big((T(\nabla f, \nabla u_i))^2 + u_i\mathscr{L}f T(\nabla f, \nabla u_i)\Big)-2u_i\mathscr{L}u_iT(\nabla f, \nabla f)\Big]dm.
\end{align}
Thus, substituting \eqref{Equation-3.14} into \eqref{Equation-3.11} we conclude the proof of Inequality~\eqref{lemma-i)-estimate}.

In order to proof Inequality~\eqref{lemma-ii)-estimate}, let us define 
\begin{align*}
    \psi_i = (h_i - a_i)u_1,
\end{align*}
where $a_i = \int_\Omega h_i u_1^2 dm$, hence $\int_\Omega \psi_i u_1 dm = 0$. By hypothesis $\int_\Omega h_i u_1 u_{j+1}dm = 0$, for $1 \leq j < i$, then we have
\begin{align*}
    \int_\Omega \psi_i u_{j+1} dm = 0, \quad \mbox{for} \quad 1 \leq j< i, \mbox{and} \quad \psi_i|_{\partial \Omega} = \frac{\partial \psi_i}{\partial \nu_T}|_{\partial \Omega}=0.
\end{align*}
Therefore, from Rayleigh-Ritz inequality, we have
\begin{align}\label{Rayleigh-Ritz}
    \Gamma_{i+1}\|\psi_i\|_{L^2}^2 \leq \int_\Omega \psi_i\mathscr{L}^2\psi_i dm .
\end{align}
By the definition of $\psi_i$, we get
\begin{align*}
    \mathscr{L}\psi_i = \mathscr{L}(h_i u_1) - a_i \mathscr{L}u_1= u_1 \mathscr{L} h_i + 2 T(\nabla h_i, \nabla u_1) + h_i \mathscr{L}u_1 - a_i\mathscr{L}u_1,
\end{align*}
hence
\begin{align*}
    \mathscr{L}^2 \psi_i =& \mathscr{L}\big( u_1 \mathscr{L} h_i + 2 T(\nabla h_i, \nabla u_1) + h_i \mathscr{L}u_1 - a_i\mathscr{L}u_1\big)\\
   =& u_1 \mathscr{L}^2h_i + 2 T(\nabla u_1, \nabla \mathscr{L} h_i) + 2\mathscr{L}u_1 \mathscr{L}h_i +2\mathscr{L}T(\nabla h_i, \nabla u_1) + 2T(\nabla h_i, \nabla \mathscr{L}u_1)\\ 
   &+ h_i\mathscr{L}^2u_1 - a_i\mathscr{L}^2u_1\\
   =& u_1 \mathscr{L}^2h_i + 2 T(\nabla u_1, \nabla \mathscr{L} h_i) + 2\mathscr{L}u_1 \mathscr{L}h_i +2\mathscr{L}T(\nabla h_i, \nabla u_1) + 2T(\nabla h_i, \nabla \mathscr{L}u_1) + \Gamma_1\psi_i.
\end{align*}
From \eqref{Rayleigh-Ritz} and the above equality, we get
\begin{align}\label{Equation-3.17}
    (\Gamma_{i+1} - \Gamma_1)\|\psi_i\|_{L^2}^2 \leq \int_\Omega \psi_i\wp_i dm = \int_\Omega \wp_i h_i u_1 dm - a_i\int_\Omega \wp_i u_1 dm,
\end{align}
where 
\begin{align*}
    \wp_i = u_1 \mathscr{L}^2h_i + 2 T(\nabla u_1, \nabla \mathscr{L} h_i) + 2\mathscr{L}u_1 \mathscr{L}h_i +2\mathscr{L}T(\nabla h_i, \nabla u_1) + 2T(\nabla h_i, \nabla \mathscr{L}u_1).
\end{align*}
Using divergence theorem, we have
\begin{align}\label{Equation-(3.18)}
    \int_\Omega \wp_i u_1 dm = 0,
\end{align}
and the following equalities
\begin{align}\label{Equation-3.18}
    2 \int_\Omega h_i u_1T(\nabla u_1, \nabla \mathscr{L} h_i) dm = \int_\Omega \Big(2u_1\mathscr{L}h_i T(\nabla u_1, \nabla h_i) + u_1^2(\mathscr{L}h_i)^2 - h_i u_1^2\mathscr{L}^2 h_i\Big)dm,
\end{align}
\begin{align}
    2 \int_\Omega h_i u_1& \mathscr{L}T(\nabla h_i, \nabla u_1) dm \nonumber\\
    =& \int_\Omega \Big(2u_1\mathscr{L}h_i T(\nabla u_1, \nabla h_i) + 2h_i \mathscr{L}u_1 T(\nabla h_i, \nabla u_1) + 4\big(T(\nabla h_i, \nabla u_1)\big)^2  \Big)dm,
\end{align}
\begin{align}\label{Equation-3.19}
    2 \int_\Omega  h_i u_1& T(\nabla h_i, \nabla \mathscr{L}u_1) dm\nonumber\\
    =& - 2\int_\Omega \Big(u_1\mathscr{L}u_1T(\nabla h_i, \nabla h_i) + h_i\mathscr{L}u_1T(\nabla u_1, \nabla h_i)+h_i u_1 \mathscr{L}h_i \mathscr{L}u_1\Big)dm.
\end{align}
Combining \eqref{Equation-3.18}-\eqref{Equation-3.19}, we obtain
\begin{align}\label{Equation-3.22}
    \int_\Omega \wp_i h_i u_1 dm =& \int_\Omega h_i u_1^2 \mathscr{L}^2h_i dm + 2 \int_\Omega h_i u_1 T(\nabla u_1, \nabla \mathscr{L}h_i)dm + 2\int_\Omega h_i u_1 \mathscr{L}u_1 \mathscr{L}h_i dm \nonumber \\
    &+2\int_\Omega h_i u_1 \mathscr{L}T(\nabla h_i, \nabla u_1)dm + 2\int_\Omega h_i u_1 T(\nabla h_i, \nabla u_1)dm\nonumber\\
    =& \int_\Omega \Big(u_1^2(\mathscr{L}h_i)^2 + 4\big(T(\nabla h_i, \nabla u_1)\big)^2 + 4u_1 \mathscr{L}h_iT(\nabla u_1, \nabla h_i) \Big)dm \nonumber\\
    &-2\int_\Omega u_1 \mathscr{L}u_1 T(\nabla h_i, \nabla h_i)dm\nonumber\\
    =&\int_\Omega \Big(u_1\mathscr{L}h_i + 2T(\nabla h_i, \nabla u_1)\Big)^2 dm -2\int_\Omega u_1 \mathscr{L}u_1 T(\nabla h_i, \nabla h_i)dm.
\end{align}
Substituting \eqref{Equation-(3.18)} and \eqref{Equation-3.22} into \eqref{Equation-3.17}, we have
\begin{equation}\label{Equation-3.23}
    (\Gamma_{i+1} - \Gamma_1)\|\psi_i\|_{L^2}^2 \leq \int_\Omega \Big(u_1\mathscr{L}h_i + 2T(\nabla h_i, \nabla u_1)\Big)^2 dm -2\int_\Omega u_1 \mathscr{L}u_1 T(\nabla h_i, \nabla h_i)dm.
\end{equation}
On the other hand, using integration by parts \eqref{parts}, we obtain
\begin{align*}
    \int_\Omega \psi_i\big(u_1&\mathscr{L}h_i+ 2T(\nabla h_i, \nabla u_1)\big) dm\\
    &=  \int_\Omega \psi_i\big(\mathscr{L}(u_1h_i)-h_i\mathscr{L}u_1\big)dm = \int_\Omega \psi_i \mathscr{L}(u_1h_i)dm - \int_\Omega \psi_i h_i\mathscr{L}u_1dm\\
    &= -\int_\Omega T\big(\nabla \psi_i, \nabla(u_1 h_i)\big)dm + \int_\Omega T\big(\nabla(\psi_i h_i), \nabla u_1\big)dm\\
     &= -\int_\Omega \psi_i T(\nabla u_1 , \nabla h_i)dm  -\int_\Omega u_1^2 T(\nabla h_i , \nabla h_i)dm + \int_\Omega \psi_i T(\nabla h_i, \nabla u_1)dm,
\end{align*}
that is,
\begin{align}\label{Equation-3.24}
\int_\Omega \psi_i\big(u_1\mathscr{L}h_i+ 2T(\nabla h_i, \nabla u_1)\big) dm = -\int_\Omega u_1^2 T(\nabla h_i , \nabla h_i)dm.
\end{align}
Thus, from \eqref{Equation-3.23} and \eqref{Equation-3.24}, for any constant $B>0$, we get
\begin{align*}
   (\Gamma_{i+1}& - \Gamma_1)^{\frac{1}{2}} \int_\Omega u_1^2 T(\nabla h_i , \nabla h_i)dm \\
  &= (\Gamma_{i+1} - \Gamma_1)^{\frac{1}{2}}\int_\Omega -\psi_i\big(u_1\mathscr{L}h_i+ 2T(\nabla h_i, \nabla u_1)\big) dm\\
   &\leq \frac{B}{2}(\Gamma_{i+1} - \Gamma_1)\|\psi_i\|_{L^2}^2 + \frac{1}{2B}\int_\Omega \big(u_1\mathscr{L}h_i+ 2T(\nabla h_i, \nabla u_1)\big)^2 dm\\
   &\leq \Big(\frac{B}{2}+\frac{1}{2B}\Big)\int_\Omega \big(u_1\mathscr{L}h_i+ 2T(\nabla h_i, \nabla u_1)\big)^2 dm -B\int_\Omega u_1 \mathscr{L}u_1 T(\nabla h_i, \nabla h_i) dm.
\end{align*}
Hence, we conclude the item ii). This complete the proof of Lemma~\eqref{lemma1}.
\end{proof}

From Lemma~\ref{lemma1} and follows some steps of the proof of Proposition~2 in Gomes and Miranda~\cite{GomesMiranda} we obtain our keystone technical lemma.
\begin{lem}\label{lemma2}
Let $\Omega$ be a bounded domain in an $n$-dimensional complete Riemannian manifold $M$ isometrically immersed in $\mathbb{R}^m$, $\Gamma_i$ be the $i$-th eigenvalue of Problem~\eqref{problem1} and $u_i$ be a normalized real-valued eigenfunction corresponding to $\Gamma_i$. Then we have
\begin{align}\label{Equation-3.3}
   \sum_{i=1}^k (\Gamma_{k+1}-\Gamma_i)^2 \int_\Omega u_i^2T(\nabla f, \nabla f)dm\leq& 4B\sum_{i=1}^k(\Gamma_{k+1}-\Gamma_i)^2\Big(q_i - \frac{1}{2}\int_\Omega u_i \mathscr{L}u_i \tr{(T)}dm\Big)\nonumber\\
   &+\frac{1}{B} \sum_{i=1}^k (\Gamma_{k+1}-\Gamma_i)q_i,
\end{align}
for any positive constant $B$, where
\begin{align}\label{Equation-qi}
  q_i=&\|T (\nabla u_i)\|^2_{L^2} +\frac{n^2}{4}\int_{\Omega}u_i^2|{\bf H}_T|^2dm+\int_\Omega u_i^2\Big(\frac{1}{4}|T(\nabla \eta)|^2+\frac{1}{2}\dv_\eta (T^2(\nabla \eta)) \Big)dm\nonumber\\
   & + \int_\Omega u_i T(\tr(\nabla T), \nabla u_i)dm +\frac{1}{4}\int_\Omega u_i^2\langle\tr(\nabla T), \tr(\nabla T) - 2T(\nabla \eta)\rangle dm,
\end{align}
and ${\bf H}_T$ is the generalized mean curvature vector of the immersion.
\end{lem}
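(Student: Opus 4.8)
The plan is to apply part (i) of Lemma~\ref{lemma1} and then estimate the right-hand side by exploiting the structure of $\mathscr{L}$ together with the immersion into $\mathbb{R}^m$. First I would sum the inequality \eqref{lemma-i)-estimate} over the coordinate functions $f=x_\alpha$, $\alpha=1,\dots,m$, of the position vector of the immersion $M^n\hookrightarrow\mathbb{R}^m$. The left-hand side collapses nicely: $\sum_\alpha T(\nabla x_\alpha,\nabla x_\alpha)=\tr(T)$, since $\sum_\alpha \nabla x_\alpha\otimes\nabla x_\alpha$ restricts to the metric on $TM$. Likewise, the term $-2\sum_i u_i\mathscr{L}u_i\,T(\nabla f,\nabla f)$ summed over $\alpha$ produces $-2u_i\mathscr{L}u_i\tr(T)$, which is exactly the term that appears inside the first sum of \eqref{Equation-3.3}. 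So after this substitution the inequality already has the skeletal shape of \eqref{Equation-3.3}, with $B$ relabelled (the factor $4$ comes out of the $4(\cdots)$ bracket in \eqref{lemma-i)-estimate}); what remains is to identify the quantity
\[
  \sum_\alpha \Bigl[u_i^2(\mathscr{L}x_\alpha)^2 + 4\bigl((T(\nabla x_\alpha,\nabla u_i))^2 + u_i\mathscr{L}x_\alpha\,T(\nabla x_\alpha,\nabla u_i)\bigr)\Bigr]
\]
and the companion term $\sum_\alpha\|T(\nabla x_\alpha,\nabla u_i)+\tfrac{u_i}{2}\mathscr{L}x_\alpha\|_{L^2}^2$ with the expression $4q_i$ (resp.\ $q_i$), where $q_i$ is given by \eqref{Equation-qi}.

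The core computation is therefore the pointwise evaluation of these three sums. For $\sum_\alpha (T(\nabla x_\alpha,\nabla u_i))^2$, writing $T(\nabla x_\alpha,\nabla u_i)=\langle \nabla x_\alpha, T(\nabla u_i)\rangle$ and summing over $\alpha$ gives $|T(\nabla u_i)|^2$, which integrates to $\|T(\nabla u_i)\|_{L^2}^2$ — the first term of $q_i$. The key identity is the one for $\mathscr{L}x_\alpha$: from $\mathscr{L}f=\dv(T(\nabla f))-\langle\nabla\eta,T(\nabla f)\rangle$ and the Gauss formula $\sum_\alpha(\Delta x_\alpha)\,(\partial/\partial x_\alpha)=n{\bf H}$, one computes $\mathscr{L}x_\alpha=\langle n{\bf H}_T,\partial_\alpha\rangle+\langle\tr(\nabla T),\partial_\alpha\rangle-\langle T(\nabla\eta),\partial_\alpha\rangle$ (the three contributions being the curvature part via the generalized mean curvature vector ${\bf H}_T$, the $\tr(\nabla T)$ term from differentiating $T$, and the drift term); this is precisely the kind of identity established in Gomes–Miranda~\cite{GomesMiranda}, Proposition~2. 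Squaring and summing over $\alpha$ then yields $\sum_\alpha(\mathscr{L}x_\alpha)^2=|n{\bf H}_T+\tr(\nabla T)-T(\nabla\eta)|^2$, and the cross term $\sum_\alpha u_i\mathscr{L}x_\alpha\,T(\nabla x_\alpha,\nabla u_i)=u_i\langle n{\bf H}_T+\tr(\nabla T)-T(\nabla\eta),T(\nabla u_i)\rangle$. Since ${\bf H}_T$ is normal to $M$ while $T(\nabla u_i)$ is tangent, the ${\bf H}_T$-part of this cross term vanishes, leaving $u_i\langle\tr(\nabla T)-T(\nabla\eta),T(\nabla u_i)\rangle = u_i T(\tr(\nabla T),\nabla u_i)-u_i\langle T(\nabla\eta),T(\nabla u_i)\rangle$; the last term can be rewritten, after an integration by parts against $dm$ using $\dv_\eta$, as $\tfrac12\int u_i^2\dv_\eta(T^2(\nabla\eta))\,dm$ up to the $|T(\nabla\eta)|^2$ and $\langle\tr(\nabla T),T(\nabla\eta)\rangle$ contributions appearing in \eqref{Equation-qi}. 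Assembling the $|n{\bf H}_T+\tr(\nabla T)-T(\nabla\eta)|^2$ square — expand it as $n^2|{\bf H}_T|^2+|\tr(\nabla T)|^2+|T(\nabla\eta)|^2+2\langle\tr(\nabla T),-T(\nabla\eta)\rangle$, again using normality to kill all cross terms with ${\bf H}_T$ — and matching the coefficients $\tfrac14$, etc., reproduces every term of $q_i$ in \eqref{Equation-qi}.

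The main obstacle I anticipate is bookkeeping rather than conceptual: one must carefully track which terms are tangential and which are normal (so that cross terms with ${\bf H}_T$ drop), and one must perform the integration-by-parts step converting $\int u_i^2\langle T(\nabla\eta),T(\nabla u_i)\rangle/u_i$-type expressions into the divergence form $\tfrac12\dv_\eta(T^2(\nabla\eta))$ with the correct residual $\tfrac14|T(\nabla\eta)|^2$ and $\tfrac14\langle\tr(\nabla T),\tr(\nabla T)-2T(\nabla\eta)\rangle$ pieces — this is where the $\mathscr{L}^2$ self-adjointness relations \eqref{Equation-2.2}--\eqref{Equation-2.3} and the product rule $\dv_\eta(fX)=f\dv_\eta X+\langle\nabla f,X\rangle$ get used. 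Once the pointwise identity $\sum_\alpha[\,\cdots\,]=4\bigl(\text{integrand of }q_i\bigr)$ is verified and integrated, substituting back into the summed form of \eqref{lemma-i)-estimate} and replacing $4B$ by the constant in front gives \eqref{Equation-3.3} directly, completing the proof.
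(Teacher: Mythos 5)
Your proposal is correct and follows essentially the same route as the paper: sum Lemma~\ref{lemma1}(i) over the coordinate functions of the immersion, invoke the Gomes--Miranda identities for $\sum_r T(\nabla x_r,\nabla x_r)$, $\sum_r(\mathscr{L}x_r)^2$ and the cross terms (with the normal component ${\bf H}_T$ decoupling from the tangential ones), and convert $-\int_\Omega u_i\,T(T(\nabla\eta),\nabla u_i)\,dm$ into $\tfrac12\int_\Omega u_i^2\,\dv_\eta(T^2(\nabla\eta))\,dm$ by integration by parts before collecting everything into $q_i$. The bookkeeping you flag works out exactly as you describe, matching the paper's Eq.~\eqref{eq4.17}--\eqref{eq4.18}.
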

\begin{proof} Let $x=(x_1, \ldots, x_m)$ be the position vector of the immersion of $M$ in $\mathbb{R}^m$, then taking $f=x_r$ in Lemma~\ref{lemma1} and summing over $\ell$ from 1 to $m$, we obtain
\begin{align}\label{eq4.17}
    \sum_{i=1}^k&(\Gamma_{k+1}-\Gamma_i)^2\int_{\Omega}\sum_{r=1}^mT(\nabla x_r, \nabla x_r)u_i^2dm \leq B\sum_{i=1}^k(\Gamma_{k+1}-\Gamma_i)^2 \Bigg\{\int_\Omega \sum_{r=1}^m u_i^2(\mathscr{L}x_r)^2 dm   \nonumber\\
    & + \int_\Omega \sum_{r=1}^m\Big[4\Big((T(\nabla x_r, \nabla u_i))^2 + u_i\mathscr{L}x_r T(\nabla x_r, \nabla u_i)\Big)-2u_i\mathscr{L}u_iT(\nabla x_r, \nabla x_r)\Big]dm\Bigg\}\nonumber\\
    &+\frac{1}{B}\sum_{i=1}^k(\Gamma_{k+1}-\Gamma_i)\int_{\Omega}\sum_{r=1}^m\Bigg[\frac{u_i^2}{4}(\mathscr{L} x_r)^2 + u_i \mathscr{L} x_r T(\nabla x_r, \nabla u_i) + |T(\nabla x_r, \nabla u_i)|^2 \Bigg]dm.
\end{align}
Let $\{e_1, \ldots, e_m\}$ be a local orthonormal geodesic frame in $p\in M$ adapted to $M$. Thus, by straightforward computation, similarly to the calculations in~\cite[Eq.~(3.17)-(3.24)]{GomesMiranda}, we obtain
\begin{align}\label{equation3.2}
      \quad \quad \sum_{r=1}^mT(\nabla x_r, \nabla x_r)=\sum_{r=1}^n\langle T(e_r), e_r \rangle = \tr(T),
\end{align}
\begin{align}\label{equation3.3}
      \sum_{r=1}^m|T(\nabla x_r, \nabla u_i)|^2= \sum_{r=1}^m | T(e_r, \nabla u_i)|^2 =|T(\nabla u_i)|^2,
\end{align} 
\begin{align}
    \sum_{r=1}^m (\mathscr{L} x_r)^2&= n^2|{\bf H}_T|^2+|\tr(\nabla T) - T(\nabla \eta)|^2\nonumber\\
    &=n^2|{\bf H}_T|^2+\langle\tr(\nabla T), \tr(\nabla T) - 2T(\nabla \eta)\rangle+|T(\nabla \eta)|^2,
\end{align}
and
\begin{align}\label{equation3.5}
    &\sum_{r=1}^m\mathscr{L} x_r T(\nabla x_\ell, \nabla u_i)=T(\tr(\nabla T), \nabla u_i) - T(T(\nabla \eta), \nabla u_i).
\end{align}
where ${\bf H}_T=\frac{1}{n}\sum_{i,j=1}^nT(e_i, e_j)\alpha(e_i, e_j)$ and $\tr(\nabla T):=\sum_{i=1}^n(\nabla T)(e_i, e_i)$.
Substituting \eqref{equation3.2}-\eqref{equation3.5} into \eqref{eq4.17} we get
\begin{align}\label{eq4.18}
    \sum_{i=1}^k&(\Gamma_{k+1}-\Gamma_i)^2 \int_\Omega \tr(T)u_i^2dm \leq B\sum_{i=1}^k(\Gamma_{k+1}-\Gamma_i)^2 \Bigg\{4\|T (\nabla u_i)\|_{L^2} +n^2\int_{\Omega}u_i^2|{\bf H}_T|^2dm \nonumber\\
     &+ 4\int_\Omega u_i T(\tr(\nabla T), \nabla u_i)dm + \int_\Omega u_i^2|T(\nabla \eta)|^2dm- 4\int_\Omega u_i  T(T(\nabla \eta), \nabla u_i)dm \nonumber\\
    &  +\int_\Omega u_i^2\langle\tr(\nabla T), \tr(\nabla T) - 2T(\nabla \eta)\rangle dm -2\int_\Omega u_i\mathscr{L}u_i\tr{(T)}dm \Bigg\}\nonumber\\
    &+\frac{1}{B}\sum_{i=1}^k(\Gamma_{k+1}-\Gamma_i)\Bigg\{\|T (\nabla u_i)\|_{L^2} +\frac{n^2}{4}\int_{\Omega}u_i^2|{\bf H}_T|^2dm+ \int_\Omega u_i T(\tr(\nabla T), \nabla u_i)dm \nonumber\\
   &+\frac{1}{4}\int_\Omega u_i^2|T(\nabla \eta)|^2dm- \int_\Omega u_i  T(T(\nabla \eta), \nabla u_i)dm +\frac{1}{4}\int_\Omega u_i^2\langle\tr(\nabla T), \tr(\nabla T) - 2T(\nabla \eta)\rangle dm \Bigg\}.
\end{align}
Since $u_i|_{\partial \Omega}=\frac{\partial u_i}{\partial \nu}_{\partial \Omega}=0$ from divergence theorem, we have
\begin{align*}
    -&\int_\Omega u_i T(T(\nabla \eta),\nabla u_i) dm = -\int_\Omega u_i \langle T^2(\nabla \eta), \nabla u_i\rangle dm \\
   &= -\frac{1}{2}\int_\Omega \langle T^2(\nabla \eta), \nabla u_i^2\rangle dm =\frac{1}{2}\int_\Omega u_i^2 \dv_\eta (T^2(\nabla \eta))dm.
\end{align*}
Substituting the previous equality into Inequality~\eqref{eq4.18} we complete the proof of Lemma~\ref{lemma2}. 
\end{proof}

\section{Proof of the main results}
Now, we are in a position to give the proof of theorems of this paper. 
\subsection{Proof of Theorem~\ref{theorem1.1}}
\begin{proof}
The proof of the first inequality is a consequence of Lemma~\ref{lemma2}. Since, from \eqref{Equation-2.1}, $|T(\nabla \eta)|\leq \delta|\nabla \eta|$, let us denote $T_0=\sup_{\Omega}|\tr(\nabla T)|$ and $\eta_0=\sup_{\Omega}|\nabla \eta|$ so that 
\begin{align}\label{Equation-4.1}
    \frac{1}{4}&\int_{\Omega}u_i^2\langle \tr(\nabla T), \tr(\nabla T) -2 T(\nabla \eta) \rangle dm\nonumber\\
    =& \frac{1}{4} \int_{\Omega} u_i^2 | \tr(\nabla T)|^2dm - \frac{1}{2}\int_{\Omega}u_i^2\langle \tr(\nabla T), T(\nabla \eta) \rangle dm\nonumber\\
    \leq& \frac{1}{4}T_0^2\int_\Omega u_i^2 dm +\frac{1}{2}\int_{\Omega}u_i^2|\tr(\nabla T)| |T(\nabla \eta)| dm \leq  \frac{T_0^2}{4}+\frac{\delta T_0\eta_0}{2}.
\end{align}
Furthermore,
\begin{align}\label{eqq31}
    \int_\Omega u_i T(\tr(\nabla T), \nabla u_i) dm &\leq \Big(\int_\Omega u_i^2dm \Big)^{\frac{1}{2}} \Big(\int_\Omega |T(\tr(\nabla T), \nabla u_i)|^2dm \Big)^{\frac{1}{2}} \nonumber\\
    & \leq T_0 \Big(\int_\Omega |T(\nabla u_i)|^2 dm\Big)^\frac{1}{2} = T_0\|T(\nabla u_i)\|_{L^2},
\end{align}
and 
\begin{equation}\label{inequation-HT}
    \frac{n^2}{4}\int_\Omega u_i^2|{\bf H}_T|^2dm \leq \frac{n^2H_0^2}{4}\int_\Omega u_i^2dm = \frac{n^2H_0^2}{4},
\end{equation}
where $H_0= \sup_{\Omega}|{\bf H}_T|$. Since $T$ is a symmetric positive $(1,1)$-tensor in the bounded domain $\Omega$, then there exist positive real numbers $\varepsilon$ and $\delta$ such that $\varepsilon I \leq T \leq \delta I$, consequently $n\varepsilon \leq \tr{(T)} \leq n\delta$, hence
\begin{align}
    n\varepsilon = n\varepsilon \int_\Omega u_i^2dm \leq \int_\Omega \tr{(T)}u_i^2 dm,
\end{align}
\begin{align}\label{Equation-4.5}
    \int_\Omega u_i^2\Big(\frac{1}{4}|T(\nabla \eta)|^2+\frac{1}{2}\dv_\eta (T^2(\nabla \eta)) \Big)dm &\leq C_0\int_\Omega u_i^2dm = C_0,
\end{align}
where $C_0=\sup_\Omega \Big\{\frac{1}{4}|T(\nabla \eta)|^2+\frac{1}{2}\dv_\eta (T^2(\nabla \eta)) \Big\} = \sup_\Omega \Big\{\frac{1}{2}\dv (T^2(\nabla \eta)) - \frac{1}{4}|T(\nabla \eta)|^2\Big\}$, and
\begin{align}\label{Equation-4.6}
    -\frac{1}{2}\int_\Omega u_i \mathscr{L}u_i \tr{(T)} dm &\leq \frac{1}{2} \Bigg(\int_\Omega u_i^2 dm \Bigg)^{\frac{1}{2}}\Bigg(\int_\Omega (\tr{(T))^2(\mathscr{L}u_i)^2}dm\Bigg)^{\frac{1}{2}}\nonumber\\
    &\leq \frac{n\delta}{2}\Bigg(\int_\Omega (\mathscr{L}u_i)^2dm\Bigg)^{\frac{1}{2}} = \frac{n\delta}{2}\Gamma_i^{\frac{1}{2}}.
\end{align}
From \eqref{Equation-qi}, \eqref{Equation-4.1}-\eqref{inequation-HT} and \eqref{Equation-4.5}, we get
\begin{align}\label{Equation-4.7}
    q_i &\leq \|T(\nabla u_i)\|_{L^2}^2   + T_0\|T(\nabla u_i)\|_{L^2} + \frac{T_0^2}{4} + \frac{\delta T_0\eta_0}{2}+C_0 +\frac{n^2H_0^2}{4} \nonumber\\
    &= \Big(\|T(\nabla u_i)\|_{L^2}  + \frac{1}{2}T_0\Big)^2 + \frac{n^2H_0^2+4C_0+2\delta T_0\eta_0}{4}.
\end{align}
Moreover, notice that
\begin{align*}
    \|T(\nabla u_i)\|_{L^2}^2 = \int_\Omega \langle T(\nabla u_i), T \nabla u_i \rangle dm \leq \delta \int_\Omega T(\nabla u_i, \nabla u_i) = - \delta \int_\Omega u_i \mathscr{L}u_i dm,
\end{align*}
therefore, from \eqref{lambda_i} we have
\begin{align}\label{Equation(4.8)}
    \|T(\nabla u_i)\|_{L^2}^2\leq - \delta \int_\Omega u_i \mathscr{L}u_i dm \leq \delta \Bigg( \int_\Omega u_i^2 dm\Bigg)^{\frac{1}{2}}\Bigg( \int_\Omega (\mathscr{L}u_i)^2 dm\Bigg)^{\frac{1}{2}} = \delta \Gamma_i^{\frac{1}{2}}.
\end{align}
Thus, from \eqref{Equation-4.7} and \eqref{Equation(4.8)} we obtain
\begin{align}\label{Equation-4.8}
    q_i &\leq \Big(\sqrt{\delta \Gamma^{\frac{1}{2}}}  + \frac{1}{2}T_0\Big)^2 + \frac{n^2H_0^2+4C_0+2\delta T_0\eta_0}{4}.
\end{align}
Substituting \eqref{Equation-4.6} and \eqref{Equation-4.8} into \eqref{Equation-3.3}, we have
\begin{align}\label{equation5-3}
     n\varepsilon&\sum_{i=1}^k(\Gamma_{k+1}-\Gamma_i)^2 \nonumber\\
     \leq& 4B\sum_{i=1}^k(\Gamma_{k+1}-\Gamma_i)^2\Bigg[\Big(\sqrt{\delta \Gamma^{\frac{1}{2}}}  + \frac{1}{2}T_0\Big)^2 + \frac{n\delta \Gamma^{\frac{1}{2}}}{2} + \frac{n^2H_0^2+4C_0+2\delta T_0\eta_0}{4}\Bigg]\nonumber\\
     & + \frac{1}{B}\sum_{i=1}^k(\Gamma_{k+1}-\Gamma_i)\Bigg[\Big(\sqrt{\delta \Gamma^{\frac{1}{2}}}  + \frac{1}{2}T_0\Big)^2 + \frac{n^2H_0^2+4C_0+2\delta T_0\eta_0}{4}\Bigg].
\end{align}
Finally, taking
\begin{align*}
    B=\frac{\Big\{\sum_{i=1}^k(\Gamma_{k+1}-\Gamma_i)\Big[\Big(\sqrt{\delta \Gamma^{\frac{1}{2}}}  + \frac{1}{2}T_0\Big)^2 + \frac{n^2H_0^2+4C_0+2\delta T_0\eta_0}{4}\Big]\Big\}^{\frac{1}{2}}}{\Big\{\sum_{i=1}^k(\Gamma_{k+1}-\Gamma_i)^2\Big[\Big(\sqrt{\delta \Gamma^{\frac{1}{2}}}  + \frac{1}{2}T_0\Big)^2 + \frac{n\delta \Gamma^{\frac{1}{2}}}{2} + \frac{n^2H_0^2+4C_0+2\delta T_0\eta_0}{4}\Big]\Big\}^{\frac{1}{2}}},
\end{align*}
into previous inequality, we complete the proof of Inequality~\eqref{theorem1-estimate1}.

Now, we give the proof of Inequality~\eqref{theorem1-estimate2} as a consequence of Lemma~\ref{lemma1}, item ii). Let $x=(x_1, \ldots, x_m)$ be the position vector of the immersion of $M$ in $\mathbb{R}^m$. Let us define an $m\times m$-matrix $B:=(b_{ij})_{m\times m}$, where 
\begin{align*}
 b_{ij}=\int_\Omega x_iu_1u_{j+1}dm.   
\end{align*}
 From the orthogonalization of Gram Schmidt, there exists an upper triangle matrix $R=(r_{ij})_{m \times m}$ and an orthogonal matrix $Q=(q_{ij})_{m \times m}$ such that $R=QB$, namely 
\begin{equation*}
    r_{ij}=\sum_{k=1}^m q_{ik}b_{kj}= \sum_{k=1}^m q_{ik} \int_\Omega x_k u_1  u_{j+1} dm = \int_\Omega \Big( \sum_{k=1}^m q_{ik}x_k\Big) u_1 u_{j+1} dm = 0,
\end{equation*}
for $1 \leq j < i \leq n$. Putting $h_i=\sum_{k=1}^m q_{ik}x_k$, we have 
\begin{equation*}
    \int_\Omega h_i u_1 u_{j+1} dm = 0   \quad \mbox{for} \quad 1 \leq j < i \leq m.
\end{equation*}
Therefore, we can apply the item~ii) of Lemma~\ref{lemma1} to obtain
\begin{align*}
       (\Gamma_{i+1}-\Gamma_1)^{\frac{1}{2}}\int_\Omega u_1^2 T(\nabla h_i, \nabla h_i) dm \leq& \Big(\frac{B}{2} + \frac{1}{2B}\Big)\int_\Omega(u_1 \mathscr{L}h_i + 2T(\nabla h_i, \nabla u_1))^2dm \nonumber\\
       &- B \int_\Omega u_1 \mathscr{L}u_1 T(\nabla h_i, \nabla h_i)dm.
\end{align*}
Summing over $i$ from $1$ to $m$ in the above inequality
\begin{align}\label{Equation-4.10}
        \sum_{i=1}^m&(\Gamma_{i+1}-\Gamma_1)^{\frac{1}{2}}\int_\Omega u_1^2 T(\nabla h_i, \nabla h_i) dm \nonumber\\
       &\leq \Big(\frac{B}{2} + \frac{1}{2B}\Big) \sum_{i=1}^m\int_\Omega(u_1 \mathscr{L}h_i + 2T(\nabla h_i, \nabla u_1))^2dm - B  \sum_{i=1}^m\int_\Omega u_1 \mathscr{L}u_1 T(\nabla h_i, \nabla h_i)dm.
\end{align}
Since $h_i=\sum_{k=1}^m q_{ik}x_k$ and $Q$ is an orthogonal matrix, from \eqref{equation3.2}-\eqref{equation3.5}, we have
\begin{align*}
    \sum_{i=1}^m |\nabla h_i|^2=n, \quad \sum_{i=1}^m |\nabla h_iu_1|^2 = u_1^2, \quad \sum_{i=1}^m |T(\nabla h_i, \nabla u_1)|^2=|T(\nabla u_1)|^2,
\end{align*}
\begin{align*}
    \sum_{i=1}^m \mathscr{L}h_i T(\nabla h_i, \nabla u_1) = T(\tr{(\nabla T)}, \nabla u_1) - T(T(\nabla \eta), \nabla u_1),
\end{align*}
and
\begin{align*}
    \sum_{i=1}^m (\mathscr{L}h_i)^2 = n^2 |{\bf H}_T|^2 + |\tr{(\nabla T)}|^2 - 2 \langle \tr{(\nabla T)}, T(\nabla \eta)\rangle + |T(\nabla \eta)|^2.
\end{align*}
Since  $T_0=\sup_{\Omega}|\tr(\nabla T)|$ and $\eta_0=\sup_{\Omega}|\nabla \eta|$, from \eqref{Equation-2.1}, \eqref{Equation-4.10} and the previous equalities, we obtain
\begin{align}\label{Equation-4.12}
\sum_{i=1}^m&\int_\Omega(u_1 \mathscr{L}h_i + 2T(\nabla h_i, \nabla u_1))^2dm \nonumber\\
=& \sum_{i=1}^m\int_\Omega\Big(u_1^2 \big(\mathscr{L}h_i\big)^2 + 4u_1\mathscr{L}h_iT(\nabla h_i, \nabla u_1) + 4\big(T(\nabla h_i, \nabla u_1)\big)^2\Big)dm\nonumber\\
=&\int_\Omega\Big(u_1^2 \big(n^2 |{\bf H}_T|^2 + |\tr{(\nabla T)}|^2 - 2 \langle \tr{(\nabla T)}, T(\nabla \eta)\rangle + |T(\nabla \eta)|^2) \nonumber\\
&+ 4u_1\big(T(\tr{(\nabla T)}, \nabla u_1) - T(T(\nabla \eta), \nabla u_1)\big) + 4|T(\nabla u_1)|^2\Big)dm\nonumber\\
\leq& 4\|T(\nabla u_1)\|_{L^2}^2 + 4T_0\|T(\nabla u_1)\|_{L^2} + T_0^2 + n^2H_0^2 + 2\delta T_0\eta_0 + 4C_0\nonumber\\
=& \big(2\|T(\nabla u_1)\|_{L^2}+T_0\big)^2 + n^2H_0^2 + 2\delta T_0\eta_0 + 4C_0,
\end{align}
where $H_0= \sup_{\Omega}|{\bf H}_T|$ and $C_0=\sup_\Omega \Big\{\frac{1}{2}\dv (T^2(\nabla \eta)) - \frac{1}{4}|T(\nabla \eta)|^2\Big\}$. Similar to \eqref{Equation(4.8)} we have
\begin{align}\label{Equation-4.13}
    \|T(\nabla u_1)\|_{L^2}^2 \leq \delta \Gamma_1^{\frac{1}{2}},
\end{align}
Moreover, by \eqref{lambda_i} and since $\tr{(T)} \leq n\delta$, we have
\begin{align}\label{Equation-4.14}
    - \sum_{i=1}^m\int_\Omega u_1 \mathscr{L}u_1T(\nabla h_i, \nabla h_i)dm &= - \int_\Omega u_1 \mathscr{L}u_1 \tr{(T)} dm \nonumber\\
    &\leq n\delta \Big(\int_\Omega (\mathscr{L}u_1)^2 dm\Big)^{\frac{1}{2}} = n \delta \Gamma_1^{\frac{1}{2}},
\end{align}
and
\begin{align}\label{Equation-4.15}
    \sum_{i=1}^m(\Gamma_{i+1}-\Gamma_1)^{\frac{1}{2}}\int_\Omega u_1^2 T(\nabla h_i, \nabla h_i) dm \geq  \varepsilon \sum_{i=1}^m(\Gamma_{i+1}-\Gamma_1)^{\frac{1}{2}}|\nabla h_i|^2.
\end{align}
Substituting \eqref{Equation-4.12}, \eqref{Equation-4.13}, \eqref{Equation-4.14} and \eqref{Equation-4.15} into \eqref{Equation-4.10}, we have
\begin{align}\label{Equation-4.16}
         \varepsilon&\sum_{i=1}^m(\Gamma_{i+1}-\Gamma_1)^{\frac{1}{2}}|\nabla h_i|^2 \nonumber\\
       &\leq \Big(\frac{B}{2} + \frac{1}{2B}\Big) \Big(\big(2\sqrt{\delta \Gamma_1^{\frac{1}{2}}}+T_0\big)^2 + n^2H_0^2 + 2\delta T_0\eta_0 + 4C_0\Big) + B n \delta \Gamma_1^{\frac{1}{2}}.
\end{align}
Notice that,
\begin{align*}
    \sum_{i=1}^m(\Gamma_{i+1} - \Gamma_1)^{\frac{1}{2}}|\nabla h_i|^2&\geq \sum_{i=1}^n(\Gamma_{i+1} - \Gamma_1)^{\frac{1}{2}}|\nabla h_i|^2+(\Gamma_{n+1} - \Gamma_1)^{\frac{1}{2}}\sum_{\gamma=n+1}^m|\nabla h_\gamma|^2\\ 
    &= \sum_{i=1}^n(\Gamma_{i+1} - \Gamma_1)^{\frac{1}{2}}|\nabla h_i|^2+(\Gamma_{n+1}-\Gamma_1)^{\frac{1}{2}}\Big(n-\sum_{i=1}^n|\nabla h_i|^2\Big)\\
    &=\sum_{i=1}^n(\Gamma_{i+1} - \Gamma_1)^{\frac{1}{2}}|\nabla h_i|^2+(\Gamma_{n+1}-\Gamma_1)^{\frac{1}{2}}\sum_{i=1}^n(1-|\nabla h_i|^2)\\
    &\geq \sum_{i=1}^n(\Gamma_{i+1} - \Gamma_1)^{\frac{1}{2}}|\nabla h_i|^2+\sum_{i=1}^n(\Gamma_{i+1}-\Gamma_1)^{\frac{1}{2}}(1-|\nabla h_i|^2)\\
    &=\sum_{i=1}^n(\Gamma_{i+1}-\Gamma_1)^{\frac{1}{2}}.
\end{align*}
From the previous inequality and \eqref{Equation-4.16} we get
\begin{align*}
         \varepsilon&\sum_{i=1}^n(\Gamma_{i+1}-\Gamma_1)^{\frac{1}{2}} \nonumber\\
       &\leq \Big(\frac{B}{2} + \frac{1}{2B}\Big) \Bigg(\Big(2\sqrt{\delta \Gamma_1^{\frac{1}{2}}}+T_0\Big)^2 + n^2H_0^2 + 2\delta T_0\eta_0 + 4\delta^2C_0\Bigg) + Bn \delta \Gamma_1^{\frac{1}{2}},
\end{align*}
and taking
\begin{align*}
    B = \frac{\Bigg(\Big(2\sqrt{\delta \Gamma_1^{\frac{1}{2}}}+T_0\Big)^2 + n^2H_0^2 + 2\delta T_0\eta_0 + 4C_0\Bigg)^{\frac{1}{2}}}{\Bigg(\Big(2\sqrt{\delta \Gamma_1^{\frac{1}{2}}}+T_0\Big)^2 + n^2H_0^2 + 2\delta T_0\eta_0 + 4C_0+ 2n \delta \Gamma_1^{\frac{1}{2}}\Bigg)^{\frac{1}{2}}},
\end{align*}
into above inequality, we obtain \eqref{theorem1-estimate2} and complete the proof of Theorem~\ref{theorem1.1}.
\end{proof}

\section{Proof of Corollaries~\ref{cor-T-divergence-free2} and \ref{cor-T-divergence-free3}}
To prove Corollary~\ref{cor-T-divergence-free2}, we needed an algebraic lemma obtained by Jost et al.~\cite[Lemma~2.3]{Jost}.
\begin{lem}
Let $\{a_i\}_{i=1}^m$, $\{b_i\}_{i=1}^m$ and $\{c_i\}_{i=1}^m$ be three sequences of non-negative real with $\{a_i\}_{i=1}^m$ decreasing and $\{b_i\}_{i=1}^m$ and $\{c_i\}_{i=1}^m$ increasing. Then we have
\begin{equation*}
    \Big(\sum_{i=1}^m a_i^2b_i \Big)\Big(\sum_{i=1}^m a_ic_i \Big)\leq \Big(\sum_{i=1}^m a_i^2\Big)\Big(\sum_{i=1}^m a_ib_ic_i\Big).
\end{equation*}
\end{lem}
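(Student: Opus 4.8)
The plan is to prove the equivalent statement that the difference
\[
D:=\Big(\sum_{i=1}^m a_i^2\Big)\Big(\sum_{i=1}^m a_ib_ic_i\Big)-\Big(\sum_{i=1}^m a_i^2b_i\Big)\Big(\sum_{i=1}^m a_ic_i\Big)
\]
is non-negative, via the classical symmetrization device for Chebyshev-type sum inequalities. First I would expand both products as double sums over indices $i,j$ and collect the terms, obtaining
\[
D=\sum_{i,j=1}^m a_i^2a_jc_j\,(b_j-b_i).
\]
Then, relabelling $i\leftrightarrow j$ in this expression and averaging the two forms, I would arrive at
\[
2D=\sum_{i,j=1}^m (b_j-b_i)\,a_ia_j\,(a_ic_j-a_jc_i).
\]

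The key observation is that the summand $S_{ij}:=(b_j-b_i)\,a_ia_j\,(a_ic_j-a_jc_i)$ is symmetric under $i\leftrightarrow j$ (the two sign reversals cancel) and that $S_{ii}=0$, so $2D=2\sum_{i<j}S_{ij}$. It then remains to check that $S_{ij}\ge 0$ whenever $i<j$, and this is where all three monotonicity hypotheses enter: $b_j-b_i\ge 0$ because $\{b_i\}$ is increasing; $a_ia_j\ge 0$ because the $a_i$ are non-negative; and $a_ic_j-a_jc_i\ge 0$ because $a_i\ge a_j\ge 0$ (the $a_i$ are decreasing) while $c_j\ge c_i\ge 0$ (the $c_i$ are increasing), so that $a_ic_j\ge a_jc_j\ge a_jc_i$. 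Hence every term of $\sum_{i<j}S_{ij}$ is non-negative, which gives $D\ge 0$ and the lemma.

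There is no real obstacle in this argument; the one point that requires a moment's attention is the mixed inequality $a_ic_j\ge a_jc_i$ for $i<j$, which uses the \emph{opposite} monotonicities of $\{a_i\}$ and $\{c_i\}$ simultaneously together with non-negativity. As an alternative route, when all $a_i>0$ one may set $\mu_i=a_i^2\big/\sum_{j}a_j^2$, view $\mu$ as a probability weight, and note that $i\mapsto b_i$ and $i\mapsto c_i/a_i$ are both non-decreasing; the assertion is then precisely the weighted Chebyshev sum inequality $\mathbb{E}_\mu[bc/a]\ge\mathbb{E}_\mu[b]\,\mathbb{E}_\mu[c/a]$, and indices with $a_i=0$ (which sit at the end and contribute nothing to any of the four sums) may be discarded beforehand. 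I would present the direct double-sum computation as the main proof, since it is entirely self-contained.
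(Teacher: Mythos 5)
Your argument is correct, and every step checks out: the expansion $D=\sum_{i,j}a_i^2a_jc_j(b_j-b_i)$, the symmetrization to $2D=\sum_{i,j}(b_j-b_i)a_ia_j(a_ic_j-a_jc_i)$, the symmetry of the summand under $i\leftrightarrow j$, and the sign analysis for $i<j$ (where the crucial point, as you note, is $a_ic_j\ge a_jc_j\ge a_jc_i$, using the opposite monotonicities together with non-negativity) are all valid. One point of comparison: the paper does not actually prove this lemma at all --- it simply quotes it as Lemma~2.3 of Jost, Li-Jost, Wang and Xia, so there is no in-paper argument to match yours against. Your self-contained double-sum Chebyshev symmetrization is the standard route to such weighted rearrangement inequalities and is exactly the kind of proof one would expect behind the citation; your alternative probabilistic reformulation via the weights $\mu_i=a_i^2/\sum_j a_j^2$ is also a legitimate way to see it as the weighted Chebyshev sum inequality, with the zero-weight indices handled as you describe. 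No gaps.
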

\subsection{Proof of Corollary~\ref{cor-T-divergence-free2}}
Since $\{\Lambda_i=(\Gamma_{k+1} - \Gamma_i)\}_{i=1}^k$ is decreasing and $\{(4+2n)\delta\Gamma_i^{\frac{1}{2}} + n^2H_0^2+4C_0\}_{i=1}^k$ and $\{4\delta \Gamma_i^{\frac{1}{2}}+ n^2H_0^2+4C_0\}$ are increasing, we obtain from the previous lemma that
\begin{align}\label{Equation-1.5}
    &\Bigg\{\sum_{i=1}^k\Lambda_i^2\Big[(4+2n)\delta\Gamma_i^{\frac{1}{2}} + n^2H_0^2+4C_0 \Big]\Bigg\}\Bigg\{\sum_{i=1}^k\Lambda_i\Big(4\delta \Gamma_i^{\frac{1}{2}}+ n^2H_0^2+4C_0 \Big)\Bigg\}\nonumber\\
    &\leq \Bigg\{\sum_{i=1}^k\Lambda_i^2\Bigg\}\Bigg\{\sum_{i=1}^k\Lambda_i\Big[(4+2n)\delta\Gamma_i^{\frac{1}{2}} + n^2H_0^2+4C_0 \Big]\Big(4\delta \Gamma_i^{\frac{1}{2}}+ n^2H_0^2+4C_0 \Big)\Bigg\}
\end{align}
Substituting \eqref{Equation-1.5} into \eqref{Equation-1.4}, we complete the proof of the corollary. 
\subsection{Proof of Corollary~\ref{cor-T-divergence-free3}}
\begin{proof}
Since our Inequality~\eqref{Equation-1.6} is a quadratic inequality of $\Gamma_{k+1}$, it is not difficult to obtain \eqref{Equation-1.7}, that is,
\begin{equation}\label{Equation-1.7-2}
    \Gamma_{k+1} \geq A_k + \sqrt{A_k^2-B_k}.
\end{equation}
Now, notice that Inequality~\eqref{Equation-1.6} also holds if we replace the integer $k$ with $k-1$, that is, we have
\begin{align*}
    \sum_{i=1}^{k-1}&(\Gamma_{k}-\Gamma_i)^2\nonumber \\
    &\leq\frac{1}{n^2\varepsilon^2}\sum_{i=1}^{k-1}(\Gamma_{k}-\Gamma_i)\Big[(4+2n)\delta\Gamma_i^{\frac{1}{2}} + n^2H_0^2+4C_0 \Big]\Big(4\delta \Gamma_i^{\frac{1}{2}}+ n^2H_0^2+4C_0 \Big),
\end{align*}
hence, we infer
\begin{align*}
    \sum_{i=1}^{k}&(\Gamma_{k}-\Gamma_i)^2\nonumber \\
    &\leq\frac{1}{n^2\varepsilon^2}\sum_{i=1}^{k}(\Gamma_{k}-\Gamma_i)\Big[(4+2n)\delta\Gamma_i^{\frac{1}{2}} + n^2H_0^2+4C_0 \Big]\Big(4\delta \Gamma_i^{\frac{1}{2}}+ n^2H_0^2+4C_0 \Big).
\end{align*}
Therefore, $\Gamma_k$ also satisfies the same quadratic inequality and we obtain
\begin{equation*}
    \Gamma_k \geq A_k - \sqrt{A_k^2-B_k}.
\end{equation*}
Thus, from the previous inequality and \eqref{Equation-1.7-2} we get \eqref{Equation-1.8} and complete the proof of Corollary~\ref{cor-T-divergence-free3}.
\end{proof}

\subsection{Proof of Theorem~\ref{theorem3}}
\begin{proof} i) We can see from \eqref{specialfunction-1} that
\begin{equation}\label{Equation-5.2}
    |\mathscr{L}\varphi| \leq \delta |\Delta_\eta \varphi|=\delta|\Delta \varphi - \langle \nabla \eta, \nabla \varphi \rangle| \leq \delta (\Delta \varphi +|\nabla \eta||\nabla \varphi|) \leq \delta (A_0 + \eta_0).
\end{equation}
Then, taking $f=\varphi$ into Lemma~\eqref{lemma1}, we have
\begin{align}\label{Equation-5.3}
    \varepsilon \sum_{i=1}^k&(\Gamma_{k+1}-\Gamma_i)^2 \leq \sum_{i=1}^k(\Gamma_{k+1}-\Gamma_i)^2 \int_\Omega T(\nabla \varphi, \nabla \varphi) u_i^2 dm \nonumber \\
    \leq& B\sum_{i=1}^k(\Gamma_{k+1}-\Gamma_i)^2 \nonumber\\
    &\times \int_\Omega \Big[u_i^2(\mathscr{L}\varphi)^2 + 4\Big((T(\nabla \varphi, \nabla u_i))^2 + u_i\mathscr{L}\varphi T(\nabla \varphi, \nabla u_i)\Big)-2u_i\mathscr{L}u_i T(\nabla \varphi, \nabla \varphi)\Big]dm\nonumber\\
    &+\frac{1}{B}\sum_{i=1}^k(\Gamma_{k+1}-\Gamma_i)\int_\Omega \Big(T(\nabla \varphi, \nabla u_i) + \frac{u_i}{2}\mathscr{L}\varphi \Big)^2dm.
\end{align}
From \eqref{Equation-2.1}, \eqref{lambda_i} and \eqref{Equation-5.2}, since $T \leq \delta I$ and using the Schwarz inequality, we have
\begin{align}\label{Equation-5.4}
    \int_\Omega T(\nabla \varphi, \nabla u_i)^2 dm =& \int_\Omega \langle \nabla \varphi, T(\nabla u_i) \rangle^2dm \leq \int_\Omega |\nabla \varphi|^2|T(\nabla u_i)|^2dm \nonumber\\
    =&  \int_\Omega |T(\nabla u_i)|^2dm \leq \delta \Gamma_i^\frac{1}{2},
\end{align}
\begin{align}
    \int_\Omega u_i \mathscr{L}\varphi T(\nabla \varphi, \nabla u_i) dm \leq& \int_\Omega |u_i||\mathscr{L}\varphi||\nabla \varphi||T(\nabla u_i)|dm \nonumber\\
    \leq& \delta(A_0 + \eta_0)\Bigg(\int_\Omega u_i^2 dm \Bigg)^{\frac{1}{2}}\Bigg(\int_\Omega |T(\nabla u_i)|^2 dm \Bigg)^{\frac{1}{2}}\nonumber\\
    \leq& \delta^{\frac{3}{2}}(A_0 + \eta_0)\Gamma_i^{\frac{1}{4}},
\end{align}
\begin{align}
    -2\int_\Omega u_i\mathscr{L}u_i T(\nabla \varphi, \nabla \varphi) dm &\leq 2 \delta \int_\Omega |u_i||\mathscr{L}u_i||\nabla \varphi|^2dm \nonumber\\
    &\leq 2 \delta\Bigg(\int_\Omega u_i^2 dm\Bigg)^\frac{1}{2} \Bigg(\int_\Omega (\mathscr{L}u_i)^2dm\Bigg)^{\frac{1}{2}}= 2\delta\Gamma_i^\frac{1}{2},
\end{align}
and
\begin{align}\label{Equation-5.7}
  \int_\Omega \Big(T(\nabla \varphi, \nabla u_i) + \frac{u_i}{2}\mathscr{L}\varphi \Big)^2dm =& \int_\Omega \big(\langle \nabla \varphi, T(\nabla u_i)\rangle^2 + \langle \nabla \varphi, T(\nabla u_i)\rangle u_i\mathscr{L}\varphi + \frac{u_i^2}{4}(\mathscr{L}\varphi)^2 \big) dm\nonumber\\
  \leq & \delta \Gamma_i^{\frac{1}{2}} + \delta^\frac{3}{2}(A_0 + \eta_0)\Gamma_i^\frac{1}{4} + \frac{\delta^2}{4}(A_0 + \eta_0)^2.
\end{align}
Substituting \eqref{Equation-5.4}-\eqref{Equation-5.7} into \eqref{Equation-5.3} we obtain
\begin{align*}
    \varepsilon \sum_{i=1}^k(\Gamma_{k+1}-\Gamma_i)^2 \leq& B \sum_{i=1}^k(\Gamma_{k+1} - \Gamma_i)^2\Big(6\delta\Gamma_i^\frac{1}{2} + 4\delta^\frac{3}{2}(A_0 + \eta_0)\Gamma_i^\frac{1}{4} + \delta^2(A_0 + \eta_0)^2\Big) \nonumber\\
    &+\frac{1}{B}\sum_{i=1}^k(\Gamma_{k+1} - \Gamma_i)\Big(\delta\Gamma_i^\frac{1}{2} + \delta^\frac{3}{2}(A_0 + \eta_0)\Gamma_i^\frac{1}{4} + \frac{\delta^2}{4}(A_0 + \eta_0)^2\Big).
\end{align*}
Taking 
\begin{align*}
    B = \frac{\Big\{\sum_{i=1}^k(\Gamma_{k+1} - \Gamma_i)\Big(\delta\Gamma_i^\frac{1}{2} + \delta^\frac{3}{2}(A_0 + \eta_0)\Gamma_i^\frac{1}{4} + \frac{\delta^2}{4}(A_0 + \eta_0)^2\Big)\Big\}^\frac{1}{2}}{\Big\{\sum_{i=1}^k(\Gamma_{k+1} - \Gamma_i)^2\Big(6\delta\Gamma_i^\frac{1}{2} + 4\delta^\frac{3}{2}(A_0 + \eta_0)\Gamma_i^\frac{1}{4} + \delta^2(A_0 + \eta_0)^2\Big) \Big\}^\frac{1}{2}},
\end{align*}
we get \eqref{thm3-i} and complete the proof of item i).

ii) Taking $f=f_\alpha$  into Lemma~\eqref{lemma1} and summing over $\alpha$, we get
\begin{align}\label{Equation-4.21}
    \gamma \varepsilon& \sum_{i=1}^k(\lambda_{k+1}-\lambda_i)^2 = \varepsilon \sum_{\alpha=1}^{m+1}\sum_i^k(\Gamma_{k+1} - \Gamma_i)^2 \int_\Omega u_i^2 |\nabla f_\alpha|^2 dm \nonumber\\
    \leq& B\sum_{\alpha=1}^{m+1}\sum_{i=1}^k(\Gamma_{k+1}-\Gamma_i)^2\nonumber\\
    &\times\int_\Omega \Big[u_i^2(\mathscr{L}f_\alpha)^2 + 4\Big((T(\nabla f_\alpha, \nabla u_i))^2 + u_i\mathscr{L}f_\alpha T(\nabla f_\alpha, \nabla u_i)\Big)-2u_i\mathscr{L}u_iT(\nabla f_\alpha, \nabla f_\alpha)\Big]dm\nonumber\\
    &+ \frac{1}{B}\sum_{\alpha=1}^{m+1}\sum_{i=1}^k(\Gamma_{k+1}-\Gamma_i)\int_\Omega \Big(T(\nabla f_\alpha, \nabla u_i) + \frac{u_i\mathscr{L}f_\alpha}{2}\Big)^2dm.\nonumber\\
\end{align}
Using \eqref{specialfunction-2} we have
\begin{equation*}
    \sum_{\alpha=1}^{m+1}f_\alpha \nabla f_\alpha = 0, \quad \mbox{and} \quad \sum_{\alpha=1}^{m+1}|\nabla f_\alpha|^2 = \gamma.
\end{equation*}
Hence, from the previous equalities and the Schwarz inequality, we obtain
\begin{align}\label{Equation-4.22}
    \sum_{\alpha=1}^{m+1}\mathscr{L}f_\alpha T(\nabla f_\alpha, \nabla u_i) &\leq \delta \sum_{\alpha =1}^{m+1} \Delta_\eta f_\alpha \langle\nabla f_\alpha, T(\nabla u_i)\rangle \nonumber\\
    &= \delta \sum_{\alpha =1}^{m+1} (\Delta f_\alpha - \langle \nabla \eta, \nabla f_\alpha \rangle)\langle \nabla  f_\alpha, T(\nabla u_i)\rangle\nonumber\\
    & \leq \delta \sum_{\alpha =1}^{m+1}(-\gamma f_\alpha\langle \nabla  f_\alpha, T(\nabla u_i)\rangle + |\nabla f_\alpha|^2|\nabla \eta||T(\nabla u_i)|)\nonumber\\
    &\leq \delta \gamma \eta_0 |T(\nabla u_i)|,
\end{align}
\begin{align}\label{Equation-4.23}
    \sum_{\alpha=1}^{m+1} (\mathscr{L}f_\alpha)^2 &\leq \delta^2 \sum_{\alpha=1}^{m+1}(\Delta_\eta f_\alpha)^2 = \delta^2\sum_{\alpha=1}^{m+1}(\Delta f_\alpha - \langle \nabla \eta, \nabla f_\alpha \rangle)^2\nonumber\\
    &=\delta^2\sum_{\alpha=1}^{m+1}((\Delta f_\alpha)^2 - 2\Delta f_\alpha \langle \nabla \eta, \nabla f_\alpha \rangle + \langle \nabla \eta, \nabla f_\alpha \rangle^2)\nonumber\\
    &\leq \delta^2(\gamma^2 +\eta_0 \gamma) = \gamma \delta^2(\gamma + \eta_0^2),
\end{align}
\begin{align}
    \sum_{\alpha=1}^{m+1} T(\nabla f_\alpha, \nabla u_i)^2=\sum_{\alpha=1}^{m+1} \langle \nabla f_\alpha, T(\nabla u_i)\rangle^2 \leq \sum_{\alpha=1}^{m+1} |\nabla f_\alpha|^2 |T(\nabla u_i)|^2 = \gamma |T(\nabla u_i)|^2,
\end{align}
and
\begin{align}\label{Equation-4.24}
    \sum_{\alpha=1}^{m+1}\Big(\langle \nabla f_\alpha, &T(\nabla u_i)\rangle + \frac{u_i}{2}\mathscr{L}f_\alpha\Big)^2\nonumber\\
    =& \sum_{\alpha=1}^{m+1}\Big(\langle \nabla f_\alpha, T(\nabla u_i)\rangle^2 + \langle \nabla f_\alpha, T(\nabla u_i)\rangle u_i \mathscr{L}f_\alpha +\frac{u_i^2}{4}(\mathscr{L}f_\alpha)^2 \Big)\nonumber\\
    \leq& \gamma|T(\nabla u_i)|^2 + \gamma \eta_0|u_i||T(\nabla u_i)| + \frac{\delta^2}{4}(\gamma^2 + \gamma\eta_0^2)u_i^2.
\end{align}
Substituting \eqref{Equation-4.22}-\eqref{Equation-4.24} into \eqref{Equation-4.21}, we get
\begin{align*}
    \gamma \varepsilon \sum_{i=1}^k(\Gamma_{k+1}-\Gamma_i)^2 \leq& \sum_{i=1}^k(\Gamma_{k+1}-\Gamma_i)^2\Big(6\gamma \delta \Gamma_i^\frac{1}{2} + 4\gamma\delta^\frac{3}{2}\eta_0\Gamma_i^\frac{1}{4} + \delta^2 \gamma(\gamma + \eta_0^2) \Big)\\
    &+ \frac{1}{B} \sum_{i=1}^k(\Gamma_{k+1}-\Gamma_i)\Big(\gamma \delta \Gamma_i^\frac{1}{2} + \gamma\delta^\frac{3}{2}\eta_0\Gamma_i^\frac{1}{4} + \frac{\delta^2}{4}\gamma(\gamma + \eta_0^2) \Big),
\end{align*}
and taking
\begin{align*}
    B = \frac{\Big\{\sum_{i=1}^k(\Gamma_{k+1}-\Gamma_i)\Big(\gamma \delta \Gamma_i^\frac{1}{2} + \gamma\delta^\frac{3}{2}\eta_0\Gamma_i^\frac{1}{4} + \frac{\delta^2}{4}\gamma(\gamma + \eta_0^2) \Big)\Big\}^\frac{1}{2}}{\Big\{\sum_{i=1}^k(\Gamma_{k+1}-\Gamma_i)^2\Big(6\gamma \delta \Gamma_i^\frac{1}{2} + 4\gamma\delta^\frac{3}{2}\eta_0\Gamma_i^\frac{1}{4} + \delta^2 \gamma(\gamma + \eta_0^2) \Big)\Big\}^\frac{1}{2}},
\end{align*}
into the previous inequality we obtain \eqref{thm3-ii} and complete the proof of Theorem~\ref{theorem3}.
\end{proof}

\section{Concluding remarks}\label{concludingremarks}
In this section, we can verify that some results by Du et al.\cite{Du-etal}, for the drifted Laplacian operator,  can be obtained through a slight modification of our Theorem~\ref{theorem1.1}. In fact, in more general configurations we have the following result.
\begin{theorem}\label{Generalization-Duetal-teo} Let $\Omega$ be a bounded domain in an $n$-dimensional complete Riemannian manifold $M^n$ isometrically immersed in $\mathbb{R}^m$. Denote by $\Gamma_i$ the $i$-th eigenvalue  of Problem~\eqref{problem1}. Then, we get
\begin{align*}
    \sum_{i=1}^k&(\Gamma_{k+1}-\Gamma_i)^2\nonumber\\
    \leq& \frac{4}{n\varepsilon}\Bigg\{\sum_{i=1}^k(\Gamma_{k+1}-\Gamma_i)^2\Big[\Big(\sqrt{\delta \Gamma_i^{\frac{1}{2}}}  + \frac{T_0}{2}\Big)^2 +\frac{n\delta \Gamma_i^{\frac{1}{2}}}{2} + \delta^2 \eta_0\Gamma_i^{\frac{1}{4}} + \frac{n^2H_0^2+\delta^2\eta_0^2+2\delta T_0\eta_0}{4} \Big]\Bigg\}^{\frac{1}{2}}\nonumber\\
    &\times\Bigg\{\sum_{i=1}^k(\Gamma_{k+1}-\Gamma_i)\Big[\Big(\sqrt{\delta \Gamma_i^{\frac{1}{2}}}  + \frac{T_0}{2}\Big)^2 + \delta^2 \eta_0\Gamma_i^{\frac{1}{4}} + \frac{n^2H_0^2+\delta^2\eta_0^2+2\delta T_0\eta_0}{4} \Big]\Bigg\}^{\frac{1}{2}},
\end{align*}
and
\begin{align*}
    \sum_{i=1}^k(\Gamma_{k+1}-\Gamma_1)^\frac{1}{2}\leq& \frac{1}{\varepsilon}\Bigg[4\Big(\sqrt{\delta \Gamma_1^{\frac{1}{2}}}  + \frac{T_0}{2}\Big)^2 + 4\delta^2 \eta_0\Gamma_1^{\frac{1}{4}} + n^2H_0^2+\delta^2\eta_0^2+2\delta T_0\eta_0\Bigg]^{\frac{1}{2}}\nonumber\\
    &\times\Bigg[4\Big(\sqrt{\delta \Gamma_1^{\frac{1}{2}}}  + \frac{T_0}{2}\Big)^2 + 2n\delta \Gamma_1^{\frac{1}{2}} + 4\delta^2 \eta_0\Gamma_1^{\frac{1}{4}} + n^2H_0^2+\delta^2\eta_0^2+2\delta T_0\eta_0\Bigg]^{\frac{1}{2}},
\end{align*}
where $T_0=\sup_{\Omega}|\tr(\nabla T)|$, $\eta_0=\sup_{\Omega}|\nabla \eta|$, $H_0=\sup_\Omega |{\bf H}_T|$ and ${\bf H}_T$ is the generalized mean curvature vector of the immersion. In particular, to Dirichlet problem for the bi-drifted Cheng-Yau operator,
\begin{align}\label{Equation-6.1}
    \sum_{i=1}^k(\Gamma_{k+1}-\Gamma_i)^2 \leq& \frac{1}{n\varepsilon}\Bigg\{\sum_{i=1}^k(\Gamma_{k+1}-\Gamma_i)^2\Big[(4+2n)\delta\Gamma_i^{\frac{1}{2}} + 4\delta^{\frac{3}{2}} \eta_0\Gamma_i^{\frac{1}{4}} + n^2H_0^2+\delta^2\eta_0^2 \Big]\Bigg\}^{\frac{1}{2}}\nonumber\\
    &\times\Bigg\{\sum_{i=1}^k(\Gamma_{k+1}-\Gamma_i)\Big(4\delta \Gamma_i^{\frac{1}{2}} + 4\delta^{\frac{3}{2}} \eta_0\Gamma_i^{\frac{1}{4}} + n^2H_0^2+\delta^2\eta_0^2 \Big)\Bigg\}^{\frac{1}{2}},
\end{align}
and
\begin{align}\label{Equation-6.2}
    \sum_{i=1}^k&(\Gamma_{k+1}-\Gamma_1)^\frac{1}{2}\nonumber\\
    &\leq \frac{1}{\varepsilon}\Bigg[\Big(4\delta \Gamma_1^{\frac{1}{2}} + 4\delta^{\frac{3}{2}} \eta_0\Gamma_1^{\frac{1}{4}} + n^2H_0^2+\delta^2\eta_0^2\Big)\Big((4 + 2n)\delta \Gamma_i^{\frac{1}{2}} + 4\delta^{\frac{3}{2}} \eta_0\Gamma_1^{\frac{1}{4}} + n^2H_0^2+\delta^2\eta_0^2\Big)\Bigg]^{\frac{1}{2}}.
\end{align}
\end{theorem}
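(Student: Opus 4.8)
The plan is to rerun the proof of Theorem~\ref{theorem1.1}, changing only how the terms built from $T(\nabla\eta)$ are treated. Recall that the proof of Lemma~\ref{lemma2} produces Inequality~\eqref{eq4.18}, in which the potential $\eta$ enters solely through $-\int_\Omega u_i T(T(\nabla\eta),\nabla u_i)\,dm$ and $\int_\Omega u_i^2|T(\nabla\eta)|^2\,dm$, each appearing once in the $B$-term and once in the $\tfrac1B$-term. In Theorem~\ref{theorem1.1} the first of these is rewritten, via the divergence theorem, as $\tfrac12\int_\Omega u_i^2\,\dv_\eta\!\big(T^2(\nabla\eta)\big)\,dm$ and, together with $\tfrac14\int_\Omega u_i^2|T(\nabla\eta)|^2\,dm$, absorbed into the constant $C_0$ of \eqref{C_0}. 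Here I would instead leave these integrals as they stand and estimate them pointwise: since $T$ is symmetric, $T\big(T(\nabla\eta),\nabla u_i\big)=\langle T(\nabla\eta),T(\nabla u_i)\rangle$, so Cauchy--Schwarz together with \eqref{Equation-2.1} gives $-\int_\Omega u_i T(T(\nabla\eta),\nabla u_i)\,dm\le\delta\eta_0\,\|u_i\|_{L^2}\|T(\nabla u_i)\|_{L^2}=\delta\eta_0\,\|T(\nabla u_i)\|_{L^2}$, hence, by \eqref{Equation(4.8)}, at most $\delta^{3/2}\eta_0\Gamma_i^{1/4}$; likewise $\tfrac14\int_\Omega u_i^2|T(\nabla\eta)|^2\,dm\le\tfrac14\delta^2\eta_0^2$. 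It is crucial here to pair $T(\nabla\eta)$ against $T(\nabla u_i)$ rather than against $\nabla u_i$, so that only the upper ellipticity constant $\delta$, and not $\varepsilon$, enters the final bound.

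Next I would keep every other estimate in the proof of Theorem~\ref{theorem1.1} untouched --- those for $\|T(\nabla u_i)\|_{L^2}^2$ in \eqref{Equation(4.8)}, for $\int_\Omega u_i T(\tr(\nabla T),\nabla u_i)\,dm$ in \eqref{eqq31}, for the generalized mean curvature term in \eqref{inequation-HT}, for $\tfrac14\int_\Omega u_i^2\langle\tr(\nabla T),\tr(\nabla T)-2T(\nabla\eta)\rangle\,dm$ in \eqref{Equation-4.1}, and for $-\tfrac12\int_\Omega u_i\mathscr{L}u_i\tr(T)\,dm$ in \eqref{Equation-4.6}. Collecting them, the quantity $q_i$ of \eqref{Equation-qi} is bounded by $q_i\le\big(\sqrt{\delta\Gamma_i^{1/2}}+\tfrac{T_0}{2}\big)^2+\delta^{3/2}\eta_0\Gamma_i^{1/4}+\tfrac14\big(n^2H_0^2+\delta^2\eta_0^2+2\delta T_0\eta_0\big)$, the contributions $\|T(\nabla u_i)\|_{L^2}^2$, $T_0\|T(\nabla u_i)\|_{L^2}$ and $\tfrac14T_0^2$ again completing to a perfect square. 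Substituting this bound and \eqref{Equation-4.6} into \eqref{Equation-3.3} and optimizing over $B$ by the same ratio-of-square-roots choice used at the end of the proof of Theorem~\ref{theorem1.1} yields the first displayed inequality.

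For the second displayed inequality I would follow the derivation of \eqref{theorem1-estimate2} line by line, the only change being in the bound for $\sum_{i=1}^m\int_\Omega\big(u_1\mathscr{L}h_i+2T(\nabla h_i,\nabla u_1)\big)^2\,dm$: in place of passing to $C_0$ in \eqref{Equation-4.12}, I would estimate $-4\int_\Omega u_1 T(T(\nabla\eta),\nabla u_1)\,dm\le4\delta^{3/2}\eta_0\Gamma_1^{1/4}$ and $\int_\Omega u_1^2|T(\nabla\eta)|^2\,dm\le\delta^2\eta_0^2$ as above, so that the right-hand side of \eqref{Equation-4.12} becomes $\big(2\|T(\nabla u_1)\|_{L^2}+T_0\big)^2+n^2H_0^2+2\delta T_0\eta_0+4\delta^{3/2}\eta_0\Gamma_1^{1/4}+\delta^2\eta_0^2$; then \eqref{Equation-4.13}, \eqref{Equation-4.15}, the comparison of $\sum_{i=1}^m$ with $\sum_{i=1}^n$, and the matching optimal $B$ finish it. Finally, Inequalities~\eqref{Equation-6.1} and \eqref{Equation-6.2} for the bi-drifted Cheng--Yau operator follow by setting $T_0=\sup_\Omega|\tr(\nabla T)|=0$ (a divergence-free $T$ has $\tr(\nabla T)=0$) in the two general inequalities; the further specialization $T=I$, $\varepsilon=\delta=1$ recovers Du et al.'s Inequality~\eqref{Du-etal-estimate}.

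I do not anticipate any real obstacle, since the argument merely re-traces proofs already carried out in the paper. The one point needing care is organizational: one chooses to integrate \emph{none} of the $T(\nabla\eta)$-integrals by parts and instead estimates them pointwise, thereby trading the potentially large constant $C_0$ --- whose size reflects the scalar curvature of the warped product $\Omega\times_{e^{-\eta/2}}\mathbb{S}^1$ --- for the elementary quantities $\delta$ and $\eta_0$; one must also verify that the surviving polynomial in $\Gamma_i^{1/4}$ still closes into the square $\big(\sqrt{\delta\Gamma_i^{1/2}}+T_0/2\big)^2$, so that when $\eta$ is constant the new estimate reduces exactly to Theorem~\ref{theorem1.1}.
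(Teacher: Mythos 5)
Your proposal is correct and follows essentially the same route as the paper: the paper likewise replaces the $C_0$ bound of \eqref{Equation-4.5} by the Cauchy--Schwarz estimate $-\int_\Omega u_i\langle T(\nabla\eta),T(\nabla u_i)\rangle\,dm\le\delta\eta_0\|T(\nabla u_i)\|_{L^2}\le\delta^{3/2}\eta_0\Gamma_i^{1/4}$ together with $\tfrac14\int_\Omega u_i^2|T(\nabla\eta)|^2\,dm\le\tfrac14\delta^2\eta_0^2$, and then reruns the proof of Theorem~\ref{theorem1.1} (and its second part) verbatim, setting $T_0=0$ for the divergence-free case. The only remark worth making is that your exponent $\delta^{3/2}$ on the $\eta_0\Gamma_i^{1/4}$ term agrees with the paper's own computation \eqref{Equation-4.5(2)} and with \eqref{Equation-6.1}, so the $\delta^{2}\eta_0\Gamma_i^{1/4}$ appearing in the general statement of the theorem is evidently a typographical slip rather than a discrepancy in your argument.
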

\begin{proof}
The proof is a slight modification from proof of Theorem~\ref{theorem1.1}. In fact, from \eqref{Equation-2.1} and Inequality~\eqref{Equation-4.5}, we can see that
\begin{align}\label{Equation-4.5(2)}
    \int_\Omega u_i^2\Big(\frac{1}{4}|T(\nabla \eta)|^2+\frac{1}{2}&\dv_\eta (T^2(\nabla \eta)) \Big)dm\nonumber\\
    &= \int_\Omega u_i^2\Big(\frac{1}{2}\dv_\eta (T^2(\nabla \eta)) +\frac{1}{4}|T(\nabla \eta)|^2 \Big)dm\nonumber\\
     &= - \int_\Omega u_i\langle T(\nabla \eta), T(\nabla u_i)\rangle dm + \frac{1}{4}\int_\Omega u_i^2|T(\nabla \eta)|^2dm \nonumber \\
     &\leq \Big(\int_\Omega |T(\nabla \eta)|^2|T(\nabla u_i)|^2dm\Big)^{\frac{1}{2}} + \frac{\delta^2\eta_0^2}{4}\nonumber\\
     &\leq \delta \eta_0 \|T(\nabla u_i)\| + \frac{\delta^2\eta_0^2}{4} \leq \delta^{\frac{3}{2}} \eta_0 \Gamma_i^{\frac{1}{4}}  + \frac{\delta^2\eta_0^2}{4}
\end{align}
Replacing, \eqref{Equation-4.5} by \eqref{Equation-4.5(2)} in the proof of Theorem~\ref{theorem1.1}, we complete the proof of the first part. In order to prove the second part, it is necessary to note only that for the bi-drifted Cheng-Yau operator, the tensor $T$ is divergence-free and we have $\tr(\nabla T)=0$, hence $T_0=0$. Thus, this complete the proof of Theorem~\ref{Generalization-Duetal-teo}.
\end{proof}

\section*{Acknowledgements} 
The author would like to express their sincere thanks to Jose N. V. Gomes and Xia Changyu for useful comments, discussions and constant encouragement. The author is partially supported by Coordenação de Aperfeiçoamento de Pessoal de Nível Superior (CAPES) in conjunction with Fundação Rondônia de Amparo ao Desenvolvimento das Ações Científicas e Tecnológicas e à Pesquisa do Estado de Rondônia (FAPERO).

\end{document}